\newcommand{\bea}{\begin{eqnarray}}
\newcommand{\eea}{\end{eqnarray}}
\newcommand{\be}{\begin {equation}}
\newcommand{\ee}{\end{equation}}
\newtheorem{theorem}{Theorem}[section]
\newtheorem{corollary}[theorem]{Corollary}
\newtheorem{condition}[theorem]{Condition}
\newtheorem{remark}[theorem]{Remark}
\newtheorem{conjecture}[theorem]{Conjecture}
\begin{document}

\title{$b$-ary expansions of algebraic numbers}

\author {Xianzu Lin }

\date{ }
\maketitle
   {\small \it College of Mathematics and Computer Science, Fujian Normal University, }\\
    \   {\small \it Fuzhou, {\rm 350108}, China;}\\   $\newline$
      \              {\small \it Email: linxianzu@126.com}

$\newline$
\begin{abstract}
In this paper we give a generalization of the main results in
\cite{ab,ab1} about $b$-ary expansions of algebraic numbers. As a
byproduct we get a large class of new transcendence criteria. One
of our corollaries implies that $b$-ary expansions of linearly
independent irrational algebraic numbers are quite independent.
Motivated by this result, we propose a generalized Borel
conjecture.

\end{abstract}


$\newline$


Keywords: $b$-ary expansions, subspace theorem, irrational
algebraic number.

$\newline$


Mathematics Subject Classification 2010: 11A63, 11K16.

$\newline$

\section{Introduction}
Let $b\geq2$ be a fixed integer. Any real number $\omega$ has a
unique $b$-ary expansion:
$$\omega=[\omega]+\sum_{i\geq1}^{\infty}a_{i}b^{-i}=[\omega]+0.a_1a_2\cdots,$$ where
$a_{i}\in\{0,1,\cdots,b-1\}$ and the set  $\{i\mid a_{i}\neq
b-1\}$ is infinite. $\omega$ is called $normal$ to base $b$ if,
for every $k\geq1$, every block of $k$ digits from
$\{0,1,\cdots,b-1\}$ occurs in the $b$-ary expansion of $\omega$
with frequency $1/b^{k}$.

A classical theorem of Borel \cite{bo} says that almost all real
numbers are normal to base $b$. In \cite{bo1} Borel made the
conjecture that all irrational algebraic numbers are  normal to
base $b$. It seems that this conjecture is far from the reach of
modern mathematics. Let $p(\omega,n)$ be the number of distinct
blocks of length $n$ occurring in the $b$-ary expansion of
$\omega$. It follows from Borel's conjecture that
$p(\omega,n)=b^{n}$ for any irrational algebraic numbers $\omega$.
But even this corollary is too difficulty. A large breakthrough in
this direction is due to Adamczewski and Bugeaud \cite{ab}. Before
introducing their results, we need some preparations.

We say that an infinite word $\textbf{a}= a_1a_2\cdots$ of
elements from $\{0,1,\cdots,b-1\}$ has $long \ repetition$ if the
following condition is satisfied, where the length of a finite
word $A$ is denoted by $|A |$.
 \begin{condition} \label{ccc}
There exist three sequences of finite nonempty words
$\{A_n\}_{n\geq1}$, $\{A'_n\}_{n\geq1}$, $\{B_n\}_{n\geq1}$ such
that:
   \begin{enumerate}
\item for any $n\geq1$, $A_nB_nA'_nB_n$ is a prefix of
$\textbf{a}$;
 \item
the sequence $\{|B_n|\}_{n\geq1}$ is strictly increasing;
 \item
there exists a positive constant $L$ such that $$(|
A_n|+|A'_n|)/|B_n|\leq L,$$ for every $n\geq1$.
\end{enumerate}
\end{condition}

One of the main results in \cite{ab} is the following:
 \begin{theorem} \label{main1}
The $b$-ary expansion of an irrational algebraic number has no
long repetitions.
\end{theorem}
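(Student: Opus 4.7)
My plan is to argue by contradiction using Schlickewei's $S$-adic extension of Schmidt's Subspace Theorem. Suppose $\omega \in (0,1)$ is an irrational algebraic number whose $b$-ary expansion $\textbf{a}$ satisfies Condition~\ref{ccc}, and set $r_n = |A_n|$, $s_n = |B_n|$, $t_n = |A'_n|$, so that $r_n + t_n \le L s_n$ with $s_n$ strictly increasing.

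First I will manufacture very good rational approximations to $\omega$ by exploiting the repeated block $B_n$. The eventually periodic number $p_n/q_n := 0.A_n \overline{B_n A'_n}$ has denominator
\[ q_n = b^{r_n}(b^{s_n+t_n}-1) = b^{r_n+s_n+t_n} - b^{r_n}. \]
Since $p_n/q_n$ and $\omega$ share the prefix $A_n B_n A'_n B_n$ of length $r_n + 2s_n + t_n$, I obtain $|\omega - p_n/q_n| \le b^{-(r_n+2s_n+t_n)}$, which rewrites as
\[ |q_n\omega - p_n| \le b^{-s_n}. \]
This one-dimensional estimate is weaker than the Roth threshold $q_n^{-2-\delta}$ once $L$ is large, which is precisely why the Subspace Theorem is required.

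Next I will apply the $S$-adic Subspace Theorem to the integer triples $\mathbf{x}_n = (p_n,\,b^{r_n},\,b^{r_n+s_n+t_n}) \in \mathbb{Z}^3$, with $S = \{\infty\} \cup \{p \text{ prime} : p \mid b\}$. At the archimedean place take the linearly independent forms $L_{1,\infty} = X_1$, $L_{2,\infty} = X_2$, $L_{3,\infty} = \omega X_3 - \omega X_2 - X_1$; at each $v \mid b$ take $L_{i,v} = X_i$ for $i=1,2,3$. The estimate above gives $|L_{3,\infty}(\mathbf{x}_n)| \le b^{-s_n}$, while the product formula applied to $b$ delivers
\[ \prod_{v \mid b}\prod_{i=1}^{3}|L_{i,v}(\mathbf{x}_n)|_v \le b^{-(2r_n+s_n+t_n)}. \]
Combining with the archimedean contribution bounds the full double product by $Cb^{-s_n}$, whereas the naive height satisfies $H(\mathbf{x}_n) \le b^{(1+L)s_n}$. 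Therefore for any $\epsilon < 1/(1+L)$ the Subspace Theorem places all but finitely many $\mathbf{x}_n$ in a finite union of proper hyperplanes of $\mathbb{Q}^3$.

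Finally I extract a contradiction. By pigeonhole a single integer relation $a_1 p_n + a_2 b^{r_n} + a_3 b^{r_n+s_n+t_n} = 0$ with $(a_1,a_2,a_3) \ne (0,0,0)$ holds for infinitely many $n$. Dividing by $q_n$ and letting $n \to \infty$, and using $p_n/q_n \to \omega$, $b^{r_n}/q_n \to 0$, and $b^{r_n+s_n+t_n}/q_n \to 1$, forces $a_1\omega + a_3 = 0$; if $a_1 \ne 0$ this makes $\omega$ rational, contradicting irrationality, while if $a_1 = 0$ the surviving relation $a_2 + a_3 b^{s_n+t_n} = 0$ cannot hold along infinitely many $n$ because $s_n$ is strictly increasing. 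I expect the main obstacle to be the careful bookkeeping in the double-product estimate: the archimedean gain $b^{-s_n}$ must be combined with the $b$-adic gain coming from $S$-integrality of the last two coordinates, and the point of using the full Subspace Theorem rather than Roth is exactly that this combined gain beats the $(1+L)$-factor in the height for \emph{any} finite $L$.
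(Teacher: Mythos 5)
Your argument is correct, but it is not the route this paper takes: what you have written is essentially the original Adamczewski--Bugeaud proof from \cite{ab}. The paper instead obtains Theorem \ref{main1} as the special case $m=2$, $a_1=-a_2=1$ of Theorem \ref{T}: Condition \ref{ccc} is converted into Condition \ref{c3} (the repeated block becomes the congruence $\overline{B^{1}_i}\equiv\overline{B^{2}_i}\ (mod\ b^{k_i})$, which for equal-length blocks forces equality), and the contradiction comes from the G.C.D.\ bound of Theorem \ref{main6}, itself a consequence of Theorems \ref{main5} and \ref{main4}. Both arguments ultimately rest on the $p$-adic Subspace Theorem (Theorem \ref{main8}), but the mechanisms differ. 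You feed it the triples $(p_n,b^{r_n},b^{r_n+s_n+t_n})$ built from the eventually periodic rational $p_n/q_n$ with preperiod $A_n$ and period $B_nA'_n$, and the decisive smallness is the archimedean estimate $|q_n\omega-p_n|\le b^{-s_n}$ coming from the quality of that rational approximation. The paper feeds it the vectors $\tfrac{1}{R_k}(Q_k,b^{d_{1,1,k}},\dots,b^{d_{n,1,k}})$, where $Q_k=[f(D_k)+P(D_k)]$ and $R_k$ is the putatively large common divisor with $b^{\lfloor D_k\rfloor}$, and the decisive smallness $|L_{0,\infty}(\mathbf{X}_k)|<2(M+1)/R_k$ comes from divisibility; the paper explicitly avoids constructing rational approximations to $\omega$. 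Your route buys brevity for this one theorem; the paper's route buys generality, since the single G.C.D.\ statement also yields Theorems \ref{main2}, \ref{T}, \ref{TT1} and the mixed-type results, which the periodic-approximation construction does not give. Two small points to tidy in your write-up: before invoking the pigeonhole you should observe that infinitely many of the $\mathbf{x}_n$ are distinct (the third coordinate tends to infinity with $s_n$); and in the endgame, if $a_1=0$ the limit relation already forces $a_3=0$ and then $a_2b^{r_n}=0$ gives $a_2=0$ directly, which is slightly cleaner than arguing from $a_2+a_3b^{s_n+t_n}=0$.
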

It follows directly from this theorem that
$$\lim_{n\rightarrow +\infty} \frac{p(\omega,n)}{n}=+\infty,$$ where $\omega$ is an irrational algebraic
numbers. This result, though far from the conjecture that
$p(\omega,n)=b^{n}$, is indeed a great advance comparing with the
previous result \cite{fm} that
$$\lim_{n\rightarrow +\infty} p(\omega,n)-n=+\infty.$$

In  \cite{ab1}, Adamczewski and Bugeaud further explored the
independence of $b$-ary expansions of two irrational algebraic
numbers $\alpha$ and $\beta$.

Let $$\textbf{a}= a_1a_2a_3\cdots$$ and $$\textbf{a}'=
a'_1a'_2a'_3\cdots$$ be two infinite words of elements from
$\{0,1,\cdots,b-1\}$. The following is a condition about the pair
$(\textbf{a},\textbf{a}')$:

 \begin{condition} \label{c1}
There exist three sequences of finite nonempty words
$\{A_n\}_{n\geq1}$, $\{A'_n\}_{n\geq1}$, $\{B_n\}_{n\geq1}$ such
that:
   \begin{enumerate}
\item  for any $n\geq1$, the word $A_nB_n$ is a prefix of
$\textbf{a}$ and the word $A'_nB_n$ is a prefix of $\textbf{a}'$;
 \item
the sequence $\{|B_n|\}_{n\geq1}$ is strictly increasing; \item
there exists a positive constant $L$ such that $$(|
A_n|+|A'_n|)/|B_n|\leq L,$$ for every $n\geq1$.
\end{enumerate}
\end{condition}

The main result in \cite{ab1} is:
 \begin{theorem} \label{main2}
Let $\alpha$ and $\alpha'$ be two irrational algebraic numbers. If
their $b$-ary expansions $$\alpha=[\alpha]+0.a_1a_2\cdots,$$ and
$$\alpha'=[\alpha']+0.a'_1a'_2\cdots$$ satisfy Condition \ref{c1},
then the two infinite words $$\textbf{a}= a_1a_2a_3\cdots$$  and
$$\textbf{a}'= a'_1a'_2a'_3\cdots$$ have the same tail.
\end{theorem}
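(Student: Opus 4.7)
The plan is to apply the $p$-adic form of Schmidt's subspace theorem (the Schlickewei--Schmidt theorem), in the spirit of \cite{ab,ab1}. Set $r_n = |A_n|$, $r'_n = |A'_n|$, $s_n = |B_n|$, and $P_n = \lfloor b^{r_n}\alpha\rfloor$, $P'_n = \lfloor b^{r'_n}\alpha'\rfloor$. Because $\{b^{r_n}\alpha\}$ and $\{b^{r'_n}\alpha'\}$ both have $B_n$ as their first $s_n$ digits, the archimedean estimate
\be
|b^{r_n}\alpha - b^{r'_n}\alpha' - (P_n - P'_n)| \leq b^{1-s_n}
\ee
holds, while the integers $b^{r_n}, b^{r'_n}$ supply small $p$-adic absolute values at every prime $p\mid b$.

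I would then apply the subspace theorem to the integer vectors $\mathbf{x}_n = (b^{r_n}, b^{r'_n}, P_n - P'_n) \in \Z^3$ with $S = \{\infty\}\cup\{p \text{ prime}: p\mid b\}$. At the archimedean place use the linearly independent forms $L_{1,\infty} = X_1$, $L_{2,\infty} = X_2$, $L_{3,\infty} = \alpha X_1 - \alpha' X_2 - X_3$; at each finite $p\in S$ use $X_1, X_2, X_3$. The product formula makes the columns indexed by $X_1$ and $X_2$ contribute $1$ each to the double product, while the $X_3$ column is bounded by $b^{1-s_n}$, so
\be
\prod_{v \in S}\prod_{i=1}^3 |L_{i,v}(\mathbf{x}_n)|_v \leq b^{1-s_n}.
\ee
Since $H(\mathbf{x}_n) \ll b^{\max(r_n,r'_n)}$ and condition (iii) of Condition \ref{c1} gives $\max(r_n,r'_n)\leq L s_n$, choosing any $\epsilon \in (0, 1/L)$ yields the key inequality $\prod |L_{i,v}(\mathbf{x}_n)|_v < H(\mathbf{x}_n)^{-\epsilon}$ for all sufficiently large $n$. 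The subspace theorem then confines the $\mathbf{x}_n$ to finitely many proper rational subspaces of $\mathbb{Q}^3$.

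Extracting an infinite subsequence contained in one such subspace, we obtain fixed $(c_1,c_2,c_3) \in \mathbb{Q}^3\setminus\{0\}$ satisfying $c_1 b^{r_n} + c_2 b^{r'_n} + c_3 (P_n - P'_n) = 0$. If $c_3 = 0$, then $b^{r_n-r'_n} = -c_2/c_1$ is constant, so $r_n - r'_n = d$ for a fixed integer $d$. If $c_3 \ne 0$, substituting $P_n - P'_n = -(c_1 b^{r_n} + c_2 b^{r'_n})/c_3$ into the archimedean estimate yields
\be
\bigl|(\alpha + c_1/c_3)\, b^{r_n} - (\alpha' - c_2/c_3)\, b^{r'_n}\bigr| \leq b^{1-s_n};
\ee
irrationality of $\alpha, \alpha'$ rules out vanishing of the bracketed coefficients, and dividing by $b^{r'_n}$ shows that $b^{r_n-r'_n}$ converges to a fixed nonzero value, hence on a further subsequence equals a constant $b^d$ with $\alpha' - c_2/c_3 = b^d(\alpha + c_1/c_3)$, i.e.\ $\alpha' - b^d\alpha = q$ for a fixed rational $q$. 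In either case the shared-block hypothesis becomes the statement that $a_i = a'_{i-d}$ on the interval $(r_n, r_n + s_n]$ for infinitely many $n$.

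The final step, which I expect to be the main obstacle, is upgrading these infinitely many length-$s_n$ agreements to a genuine ``same tail'' conclusion. One route is a case split on whether $r_n$ stays bounded along our subsequence: if it does, then since $s_n \to \infty$ the identity $a_i = a'_{i-d}$ propagates to every $i$ beyond a fixed threshold. Otherwise $r_n \to \infty$, and one must show that the rational $q$ in $\alpha' - b^d\alpha = q$ lies in $\Z[1/b]$. This should follow by noting that any rational whose denominator has a prime factor outside $b$ has a purely periodic, non-eventually-zero $b$-ary expansion; adding such a $q$ to $b^d\alpha$ would disturb infinitely many digits of $\alpha'$ on arbitrarily long windows, contradicting the exact digit agreement on $(r_n, r_n + s_n]$ once $s_n$ exceeds the period. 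Once $q \in \Z[1/b]$ is established, the identity $\alpha' = b^d\alpha + q$ directly gives that $\textbf{a}$ and $\textbf{a}'$ share a tail.
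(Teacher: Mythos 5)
Your overall route---feeding the vectors $(b^{r_n},b^{r'_n},P_n-P'_n)$ directly into the $p$-adic subspace theorem---is essentially that of Adamczewski and Bugeaud's original paper rather than this paper's, which instead deduces Theorem \ref{main2} from Theorem \ref{TT1} (to obtain a linear dependence $x+y\alpha+z\alpha'=0$) followed by repeated appeals to the g.c.d.\ estimate of Theorem \ref{main4}. That difference of organization is legitimate, and your archimedean estimate, choice of linear forms, product-formula computation and height bound are all correct.

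The genuine gap is the branch $c_3=0$ combined with $r_n\to\infty$. In that branch you extract only the relation $r_n-r'_n=d$ and no arithmetic relation between $\alpha$ and $\alpha'$ at all; yet your final paragraph, in the subcase $r_n\to\infty$, invokes ``the rational $q$ in $\alpha'-b^d\alpha=q$'', which was produced only when $c_3\ne0$. The subspace theorem gives you no control over which proper subspace captures infinitely many $\mathbf{x}_n$: the subspace $X_1-b^dX_2=0$ already contains every vector with $r_n-r'_n=d$, so the theorem's conclusion can be satisfied ``for free'' without saying anything about $P_n-P'_n$. And knowing only that the digit strings agree on the windows $(r_n,r_n+s_n]$ with $r_n\to\infty$ does not yield a common tail, since those windows need not overlap or eventually cover a tail. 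To close the gap you must extract a relation involving the third coordinate, e.g.\ by applying the subspace theorem (or Ridout's theorem) a second time inside the exceptional subspace: from $|(b^d\alpha-\alpha')b^{r'_n}-(P_n-P'_n)|\le b^{1-s_n}$ and $r'_n\le Ls_n$ one gets rational approximations to $\gamma=b^d\alpha-\alpha'$ with denominator $b^{r'_n}$ of quality $b^{1-s_n-r'_n}$, which contradicts Ridout's theorem unless $\gamma$ is rational; this restores $\alpha'=b^d\alpha+q$ and lets the remainder of your argument (which is sound, including the reduction to $q\in\Z[1/b]$) go through. This missing step is precisely the analogue of the paper's invocation of Theorems \ref{TT1} and \ref{main4} to force the linear dependence before any case analysis begins.
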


In this paper, we show that for a fix irrational algebraic number
$\alpha$, and a fix nontrivial linear equation
$$a_{1}x_{1}+a_{2}x_{2}+\cdots+a_{n}x_{n}=0,$$ where $a_{1},a_{2},\cdots,a_{n}\in\mathbb{Z}$, the $b$-ary
expansion of $\alpha$ can not have $n$ disjoint long sub-words
which are correlated by the above linear equation. When the linear
equation is $x_{1}-x_{2}=0$, we recover Theorem \ref{main1}.
Similar result holds for several algebraic numbers
$\alpha_{1},\alpha_{2},\cdots,\alpha_{n}$ such that
$1,\alpha_{1},\alpha_{2},\cdots,\alpha_{n}$ are linearly
independent over $\mathbb{Q}$. Applying this  result to a pair of
algebraic numbers quickly implies Theorem \ref{main2}. Our method
is different from that of \cite{ab,ab1}. In particular, we do not
use rational approximations to algebraic numbers. Instead, we
deduce all things from  a single theorem about greatest common
divisor of a big sum and a pow of $b$ (Theorem \ref{main4}).

This paper is structured as follows: In Section 2, we state the
main results of this paper after some preparations. In Section 3,
we supply all the proofs.  In Sections 4, we propose a generalized
Borel conjecture and some other questions.

 \section{Main results}
Throughout this paper, let $b\geq2$ be a fixed integer. All the
irrational algebraic numbers we consider lie in the interval
$(0,1)$. All the words considered in this paper mean words of
elements from $\{0,1,\cdots,b-1\}$. For an irrational algebraic
number $\alpha$, we always identify the $b$-ary expansions
$$\alpha=0.a_1a_2a_3\cdots$$  with the infinite word $$\textbf{a}=
a_1a_2a_3\cdots.$$ The length of a finite word $A$ is denoted by
$| A |$.  For any real number $a$, $[a]$ and $\{a\}$ denote
respectively the integer part and the fractional part of $a$.
Finally, for two integers $a$ and $b$, denote the greatest common
divisor of  $a$ and $b$ by $G.C.D(a,b)$.

Before giving the main result, we introduce some new definitions
and notations.

Given $n$ positive integers $a_1,a_2,\cdots, a_n$, an array of
\textbf{nonzero} rational numbers $\{b_{i,j}\}$, where $1\leq
i\leq n$, $1\leq j\leq a_i$, will be called an $(a_1,a_2,\cdots,
a_n)$-$array$. For each $(a_1,a_2,\cdots, a_n)$-array
$\{b_{i,j}\}$, Set
$$\lfloor
\{b_{i,j}\}\rfloor=\min_{i,j}(b_{i,j}),$$  $$\langle
\{b_{i,j}\}\rangle=\min_{1\leq i\leq n \atop 1\leq j\leq
a_i-1}(b_{i,j+1}-b_{i,j}), $$ and set
$$\langle
\overline{\{b_{i,j}\}}\rangle=\min_{(i,j)\neq(i',j')}(|b_{i,j}-b_{i',j'}|).$$

 Let $L$ be a positive
real number. An $(a_1,a_2,\cdots, a_n)$-array $\{b_{i,j}\}$ is
$L$-$admissible$ if the following conditions are satisfied :
 \begin{enumerate}
 \item  $b_{i,j}$ is a positive integer for  $1\leq i\leq n$,
$1\leq j\leq a_i$;
  \item  $b_{i,j}<b_{i,j+1}$ for $1\leq i\leq n$,
$1\leq j\leq a_i-1$;
 \item
$\max_{i,j}(b_{i,j})\leq L \min_{i,j}(b_{i,j})$.
\end{enumerate}

Now we are in the position to state a theorem from which all the
results of this paper can be derived:
 \begin{theorem} \label{main4}
Let $\alpha_{1},\alpha_{2},\cdots,\alpha_{n}$ be $n$ irrational
algebraic numbers and let $\epsilon$, $L$, and $M$ be fix positive
numbers. Let $P$ be an real valued function defined on the set of
all $L$-admissible $(\underbrace{1,\cdots,1}_n)$-arrays such that
$|P(D)|<M$ for every $D$. For each $L$-admissible
$(\underbrace{1,\cdots,1}_n)$-array $D=\{d_{i,1}\}$, set
$$f(D)=\sum_{i=1}^{n}\alpha_{i}
b^{d_{i,1}}.$$ Then if both $\lfloor D\rfloor$ and $\langle
\overline{D}\rangle$  are sufficiently large, we have
$$G.C.D([f(D)+P(D)],b^{\lfloor
D\rfloor})<b^{\epsilon\lfloor D\rfloor}.$$
\end{theorem}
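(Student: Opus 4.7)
The plan is to derive the bound from Schmidt's $p$-adic subspace theorem. Assume toward a contradiction that there is an infinite sequence of $L$-admissible $(\underbrace{1,\ldots,1}_n)$-arrays $D^{(m)}=\{d_{i,1}^{(m)}\}$ along which $N_m:=\lfloor D^{(m)}\rfloor\to\infty$, $\langle\overline{D^{(m)}}\rangle\to\infty$, yet $G.C.D([f(D^{(m)})+P(D^{(m)})],b^{N_m})\geq b^{\epsilon N_m}$. Writing $y_m:=[f(D^{(m)})+P(D^{(m)})]$, I form the integer vectors
\[
\mathbf{x}_m=\bigl(y_m,\,b^{d_{1,1}^{(m)}},\,\ldots,\,b^{d_{n,1}^{(m)}}\bigr)\in\mathbb{Z}^{n+1}
\]
and apply Schmidt's theorem at the set of places $S=\{\infty\}\cup\{p\text{ prime}:p\mid b\}$, taking at $\infty$ the linearly independent linear forms $L_0^\infty=x_0-\sum_{i=1}^n\alpha_i x_i$ and $L_i^\infty=x_i$ ($1\le i\le n$), and at each $p\mid b$ the coordinate forms $L_i^p=x_i$ ($0\le i\le n$); all coefficients are algebraic.

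The global product telescopes cleanly. At $\infty$, $|L_0^\infty(\mathbf{x}_m)|=|y_m-f(D^{(m)})|\le M+1$ (since $y_m-f=P-\{f+P\}$) and $|L_i^\infty(\mathbf{x}_m)|=b^{d_{i,1}^{(m)}}$; at each $p\mid b$, $|L_0^p(\mathbf{x}_m)|_p=p^{-v_p(y_m)}$ and $|L_i^p(\mathbf{x}_m)|_p=p^{-v_p(b)d_{i,1}^{(m)}}$. Using $\prod_{p\mid b}p^{v_p(b)}=b$, the factors involving $b^{d_{i,1}^{(m)}}$ cancel between the archimedean and finite places, leaving
\[
\prod_{v\in S}\prod_{i=0}^n|L_i^v(\mathbf{x}_m)|_v\;\le\;(M+1)\prod_{p\mid b}p^{-v_p(y_m)}\;\le\;\frac{M+1}{G.C.D(y_m,b^{N_m})}\;\le\;(M+1)\,b^{-\epsilon N_m},
\]
since $G.C.D(y_m,b^{N_m})=\prod_{p\mid b}p^{\min(v_p(y_m),N_m v_p(b))}\le\prod_{p\mid b}p^{v_p(y_m)}$. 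Because $H(\mathbf{x}_m)\le C\,b^{LN_m}$ by $L$-admissibility, for any $\delta\in(0,\epsilon/L)$ the bound above is at most $H(\mathbf{x}_m)^{-\delta}$ once $N_m$ is large. Schmidt's theorem then forces all but finitely many $\mathbf{x}_m$ into a finite union of proper rational subspaces of $\mathbb{Q}^{n+1}$, and by pigeonhole infinitely many lie in one such subspace, giving a fixed nontrivial relation
\[
c_0 y_m+c_1 b^{d_{1,1}^{(m)}}+\cdots+c_n b^{d_{n,1}^{(m)}}=0,\qquad c_i\in\mathbb{Q},
\]
valid along a sub-sequence.

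Finally I dispose of both cases. If $c_0=0$, the identity $\sum_i c_i b^{d_{i,1}^{(m)}}=0$ with fixed, not all zero, rational coefficients contradicts the dominance of the largest-exponent term once $\langle\overline{D^{(m)}}\rangle$ is large enough. If $c_0\ne 0$, setting $\gamma_i:=-c_i/c_0\in\mathbb{Q}$ gives $y_m=\sum_i\gamma_i b^{d_{i,1}^{(m)}}$, and $|y_m-f(D^{(m)})|\le M+1$ becomes $\bigl|\sum_i(\gamma_i-\alpha_i)b^{d_{i,1}^{(m)}}\bigr|\le M+1$. Since each $\alpha_i$ is irrational while $\gamma_i\in\mathbb{Q}$, every coefficient $\gamma_i-\alpha_i$ is a fixed nonzero real, and the same dominance argument shows the left-hand side grows at least like $b^{\lfloor D^{(m)}\rfloor}$, contradicting the bound once both $\lfloor D^{(m)}\rfloor$ and $\langle\overline{D^{(m)}}\rangle$ are sufficiently large.

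The main obstacle is the telescoping itself: because $b$ may be composite, $G.C.D(y_m,b^{N_m})$ need not be a pure power of $b$, and passing from this gcd to the product $\prod_{p\mid b}|y_m|_p$ demands the identity $\prod_{p\mid b}p^{v_p(b)}=b$ together with the elementary bookkeeping above. Once this input is fed into the subspace theorem, the remainder of the proof is the standard \emph{fixed irrational minus fixed rational, times growing power of $b$} dominance argument familiar in this area.
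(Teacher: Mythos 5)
Your proposal is correct and follows essentially the same route as the paper: the same $p$-adic subspace theorem with the same linear forms $-x_0+\sum\alpha_ix_i$, $x_i$ at $\infty$ and the coordinate forms at $p\mid b$, followed by extraction of a fixed rational linear relation and a contradiction via the growth of $\langle\overline{D}\rangle$. The only (equivalent) cosmetic differences are that the paper divides the vector by the gcd $R_k$ and invokes the product formula where you instead keep the integer vector and bound $\prod_{p\mid b}|y_m|_p$ by $G.C.D(y_m,b^{N_m})^{-1}$ directly, and that the paper kills the coefficients $e_i$ one at a time by successive division rather than your single dominance estimate.
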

A direct consequence of Theorem \ref{main4} is the following:
 \begin{theorem} \label{main5}
Let $\alpha_{1},\alpha_{2},\cdots,\alpha_{n}$ be $n$ algebraic
numbers, such that $$1,\alpha_{1},\alpha_{2},\cdots,\alpha_{n}$$
are linearly independent over $\mathbb{Q}$. Let $a_1,a_2,\cdots,
a_n$ be $n$ positive integers, let $C=\{c_{i,j}\}$ be a fixed
$(a_1,a_2,\cdots, a_n)$-$array$, and let $\epsilon$, $L$ and $M$
be fixed positive numbers. Let $P$ be an real valued function
defined on the set of all $L$-admissible $(a_1,a_2,\cdots,
a_n)$-arrays such that $|P(D)|<M$ for every $D$. $\newline$ For
each $L$-admissible $(a_1,a_2,\cdots, a_n)$-array $D=\{d_{i,j}\}$,
set
$$f(D)=\sum_{i=1}^{n}\sum_{j=1}^{a_i}\alpha_ic_{i,j}
b^{d_{i,j}}.$$ Then when both $\lfloor D\rfloor$ and $\langle
D\rangle$  are sufficiently large, we have
$$G.C.D([f(D)+P(D)],b^{\lfloor
D\rfloor})<b^{\epsilon\lfloor D\rfloor}.$$
\end{theorem}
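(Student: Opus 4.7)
The plan is to reduce Theorem \ref{main5} to Theorem \ref{main4} by a pigeonhole/subsequence argument that aggregates ``nearly coincident'' exponents into a single sparse sum with fixed irrational algebraic coefficients. I argue by contradiction: suppose the conclusion fails, so that there is a sequence $D^{(k)} = \{d_{i,j}^{(k)}\}$ of $L$-admissible $(a_1,\ldots,a_n)$-arrays with $\lfloor D^{(k)}\rfloor \to \infty$ and $\langle D^{(k)}\rangle \to \infty$ along which the gcd bound fails at some fixed $\epsilon$.

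First I pass to a subsequence (finitely many times, one for each of the $\binom{\sum a_i}{2}$ pairs of indices) so that for every pair $(i,j)\ne(i',j')$ the integer sequence $d_{i,j}^{(k)} - d_{i',j'}^{(k)}$ is either eventually constant or tends to $\pm\infty$. Call two indices \emph{close} when their differences stay bounded, and denote the resulting equivalence classes (the ``clusters'') by $C_1,\ldots,C_{N_0}$. Because $\langle D^{(k)}\rangle \to \infty$, two distinct indices in the same row are never close, so each cluster contains at most one index per row. Setting $e_m^{(k)} := \min\{d_{i,j}^{(k)} : (i,j)\in C_m\}$ and $\delta_{(i,j)} := d_{i,j}^{(k)} - e_m^{(k)}$ (a nonnegative integer independent of $k$ for large $k$), I rewrite
$$f(D^{(k)}) \;=\; \sum_{m=1}^{N_0} \beta_m \, b^{e_m^{(k)}}, \qquad \beta_m \;:=\; \sum_{(i,j)\in C_m} \alpha_i\, c_{i,j}\, b^{\delta_{(i,j)}}.$$
Grouping the sum for $\beta_m$ by row gives $\beta_m = \sum_{i\in I_m} q_i\,\alpha_i$ with each $q_i$ a nonzero rational, so the linear independence of $1,\alpha_1,\ldots,\alpha_n$ over $\mathbb{Q}$ forces $\beta_m \notin \mathbb{Q}$; and $\beta_m$ is algebraic by construction.

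Finally, $\{e_m^{(k)}\}_m$ is an $L$-admissible $(\underbrace{1,\ldots,1}_{N_0})$-array with $\lfloor\{e_m^{(k)}\}\rfloor = \lfloor D^{(k)}\rfloor \to \infty$, and distinct clusters correspond to divergent pairs, giving $\langle\overline{\{e_m^{(k)}\}}\rangle \to \infty$ as well. After extending $P$ to a bounded function $\tilde P$ on all length-$N_0$ $L$-admissible arrays (set it to $0$ off the chosen subsequence), Theorem \ref{main4} applied to the fixed irrational algebraic numbers $\beta_1,\ldots,\beta_{N_0}$ yields the strict gcd bound for all large $k$, contradicting the choice of sequence. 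The main obstacle is the irrationality of $\beta_m$: this is exactly where the strengthened hypothesis that $1,\alpha_1,\ldots,\alpha_n$ are linearly independent over $\mathbb{Q}$ (rather than each $\alpha_i$ being merely irrational, as in Theorem \ref{main4}) is genuinely used, and casting the reduction in contradiction form is what lets me avoid a circular dependence between any pre-chosen clustering threshold and the ``sufficiently large'' constant supplied by Theorem \ref{main4}.
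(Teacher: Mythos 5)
Your proposal is correct and follows essentially the same route as the paper: the paper likewise argues by contradiction, passes to a subsequence on which each pairwise difference of exponents is either eventually constant or divergent, partitions the index set into clusters $U_s$ with representatives $d_{i_s,j_s,k}$, forms the irrational algebraic numbers $\beta_s=\sum_{(i,j)\in U_s}\alpha_i c_{i,j}b^{d_{i,j,k}-d_{i_s,j_s,k}}$ (using, exactly as you note, that $\langle D_k\rangle\to\infty$ forbids two same-row indices in one cluster, so linear independence over $\mathbb{Q}$ gives irrationality), and applies Theorem \ref{main4} to the reduced array $E_k=\{d_{i_s,j_s,k}\}_s$ with $\lfloor E_k\rfloor=\lfloor D_k\rfloor$. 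Your clusters, offsets $\delta_{(i,j)}$, and $\beta_m$ correspond precisely to the paper's $U_s$, constant differences, and $\beta_s$.
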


The following two special cases of Theorem \ref{main5} is more
convenient for applications.

 \begin{theorem} \label{main6}
Let $\alpha$ be an irrational algebraic number. Let $n$ a positive
integer. Let $C=\{c_{i}\}$ be a fixed $(n)$-$array$, and let
$\epsilon$ and $L$  be fixed positive numbers. For each
$L$-admissible $(n)$-array $D=\{d_{i}\}$, set
$$f(D)=\sum_{i=1}^{n}c_{i}[\alpha
b^{d_{i}}].$$ Then when both $\lfloor D\rfloor$ and $\langle
D\rangle$  are sufficiently large, we have
$$G.C.D([f(D)],b^{\lfloor D\rfloor})<b^{\epsilon\lfloor D\rfloor}.$$
\end{theorem}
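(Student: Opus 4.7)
The plan is a direct reduction to Theorem \ref{main5}: the idea is to absorb the fractional-part contributions of $\alpha b^{d_i}$ into the bounded perturbation $P$ permitted by that theorem. Specifically, I would apply Theorem \ref{main5} with one algebraic number, taking (in the notation of that theorem) $n=1$, $\alpha_1 := \alpha$ and $a_1 := n$ of the current statement. The linear-independence hypothesis $1,\alpha_1$ over $\mathbb{Q}$ collapses to $\alpha \notin \mathbb{Q}$, which is given. The current $(n)$-array $\{c_i\}$ plays the role of the one-row $(a_1)$-array $\{c_{1,j}\}$, and an $L$-admissible $(n)$-array $\{d_i\}$ becomes an $L$-admissible $\{d_{1,j}\}$; since only one row is present, the quantities $\lfloor D\rfloor$ and $\langle D\rangle$ used in the two theorems coincide.

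Under this identification the function attached to Theorem \ref{main5} is
$$g(D) \;=\; \sum_{i=1}^{n} \alpha\, c_i\, b^{d_i} \;=\; \alpha \sum_{i=1}^{n} c_i\, b^{d_i}.$$
I would then define
$$P(D) \;:=\; -\sum_{i=1}^{n} c_i \{\alpha b^{d_i}\},$$
which is bounded uniformly by $M := \sum_i |c_i|$ because the $c_i$ are fixed rationals and each fractional part lies in $[0,1)$. Using $\alpha b^{d_i} = [\alpha b^{d_i}] + \{\alpha b^{d_i}\}$,
$$g(D)+P(D) \;=\; \sum_{i=1}^{n} c_i \bigl(\alpha b^{d_i} - \{\alpha b^{d_i}\}\bigr) \;=\; \sum_{i=1}^{n} c_i [\alpha b^{d_i}] \;=\; f(D),$$
so $[g(D)+P(D)] = [f(D)]$. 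Applying Theorem \ref{main5} to the pair $(g,P)$ yields $G.C.D([f(D)], b^{\lfloor D\rfloor}) < b^{\epsilon \lfloor D\rfloor}$ as soon as $\lfloor D\rfloor$ and $\langle D\rangle$ are sufficiently large, which is exactly the conclusion of Theorem \ref{main6}.

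There is no substantive obstacle here: the argument is a purely formal reduction. The only point that deserves a line of verification is that the function $P$ constructed above qualifies as an admissible perturbation in Theorem \ref{main5}, i.e. that the bound $M$ is independent of $D$. This is immediate from the fact that the array $C$ is fixed and that $|\{\alpha b^{d_i}\}|<1$ regardless of $d_i$. The matching of the notions of $L$-admissibility, as well as of $\lfloor D\rfloor$ and $\langle D\rangle$, between the $(n)$-array formulation and the one-row $(a_1)$-array formulation is likewise immediate from the definitions.
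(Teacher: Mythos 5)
Your reduction is essentially the paper's own proof: it applies Theorem \ref{main5} with $n=1$ and a uniformly bounded perturbation $P$ that absorbs the fractional parts. The only (immaterial) difference is that the paper's $P$ carries an extra term $-\{c_j[\alpha b^{d_j}]\}$ to make the perturbed sum an integer, whereas your simpler choice $P(D)=-\sum_i c_i\{\alpha b^{d_i}\}$ reproduces $[f(D)]$ exactly and is equally admissible since Theorem \ref{main5} only requires $|P|$ bounded independently of $D$.
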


 \begin{theorem} \label{main7}
Let $\alpha_{1},\alpha_{2},\cdots,\alpha_{n}$ be $n$ algebraic
numbers, such that $1,\alpha_{1},\alpha_{2},\cdots,\alpha_{n}$ are
linearly independent over $\mathbb{Q}$. Let  $\epsilon$ and $L$ be
fixed positive numbers. For each $L$-admissible
$(\underbrace{1,\cdots,1}_n)$-array $D=\{d_{i}\}$, set
$$f(D)=\sum_{i=1}^{n}[\alpha_{i}
b^{d_{i}}].$$ Then when $\lfloor D\rfloor$ is sufficiently large,
we have
$$G.C.D(f(D),b^{\lfloor D\rfloor})<b^{\epsilon\lfloor D\rfloor}.$$
\end{theorem}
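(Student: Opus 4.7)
The plan is to deduce Theorem \ref{main7} as a direct specialization of Theorem \ref{main5}. First, I would apply Theorem \ref{main5} with $a_i=1$ and $c_{i,1}=1$ for every $i$: the collection of $L$-admissible $(a_1,\ldots,a_n)$-arrays then coincides with the collection of $L$-admissible $(\underbrace{1,\ldots,1}_n)$-arrays appearing in Theorem \ref{main7}, and the double sum $\sum_{i,j}\alpha_i c_{i,j} b^{d_{i,j}}$ collapses to $\sum_{i=1}^{n}\alpha_i b^{d_i}$. The linear independence of $1,\alpha_1,\ldots,\alpha_n$ over $\mathbb{Q}$ is identical in both statements, so the hypotheses on the $\alpha_i$ transfer without modification.

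Next I would introduce the perturbation
$$P(D)\;=\;-\sum_{i=1}^{n}\{\alpha_i b^{d_i}\},$$
whose values lie in $(-n,0]$, so one may take $M:=n$ in Theorem \ref{main5}. By construction,
$$\sum_{i=1}^{n}\alpha_i b^{d_i}+P(D)\;=\;\sum_{i=1}^{n}[\alpha_i b^{d_i}]\;=\;f(D),$$
the integer appearing in Theorem \ref{main7}. Because this quantity is already an integer, $\bigl[f(D)+P(D)\bigr]=f(D)$, and hence the G.C.D.\ bounded by Theorem \ref{main5} is precisely the one we must estimate here.

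Finally, I would dispose of the hypothesis on $\langle D\rangle$: in the definition $\langle\{b_{i,j}\}\rangle=\min_{i,\,1\leq j\leq a_i-1}(b_{i,j+1}-b_{i,j})$ the index set is empty when every $a_i$ equals $1$, so by the usual convention the minimum is $+\infty$ and the requirement that $\langle D\rangle$ be sufficiently large holds vacuously. Consequently only the condition on $\lfloor D\rfloor$ survives, matching Theorem \ref{main7} exactly, and Theorem \ref{main5} yields the bound $G.C.D(f(D),b^{\lfloor D\rfloor})<b^{\epsilon\lfloor D\rfloor}$. The only real obstacle is this bookkeeping identification; all the genuine arithmetic depth already sits inside Theorem \ref{main5} (and ultimately Theorem \ref{main4}), which I would invoke as a black box.
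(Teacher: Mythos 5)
Your proposal matches the paper's own proof exactly: the paper derives Theorem \ref{main7} from Theorem \ref{main5} by taking $a_1=\cdots=a_n=1$ and $P(D)=-\sum_{i=1}^{n}\{\alpha_i b^{d_i}\}$, precisely as you do. Your additional remark that the hypothesis on $\langle D\rangle$ is vacuous when every $a_i=1$ is a correct piece of bookkeeping that the paper leaves implicit.
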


We need some preliminaries before giving more specific corollaries
of Theorem \ref{main5}.

Let $a_1,a_2,\cdots a_m$ be $m$ fixed non-zero integers. Let
$\textbf{a}$ be an infinite word of elements from
$\{0,1,\cdots,b-1\}$. For any finite non-empty word $s_{m}\cdots
s_{1}s_{0}$, set
$$\overline{s_{m}\cdots s_{1}s_{0}}=\sum_{i=0}^{m}s_{i}b^{i}.$$
The following is a condition about $\textbf{a}$ and the numbers
$a_1,a_2,\cdots a_m$:

 \begin{condition} \label{c3}
There exist $2m$ sequences of finite nonempty words
$$\{A^{1}_i\}_{i\geq1},\cdots,\{A^{m}_i\}_{i\geq1},
\{B^{1}_i\}_{i\geq1},\cdots,\{B^{m}_i\}_{i\geq1}$$ such that:
   \begin{enumerate}
\item  for each $i\geq1,j\leq m$, the word $A^{j}_iB^{j}_i$ is a
prefix of the word $\textbf{a}$ ;
 \item
for each $i\geq1$, $$|B^{1}_i|=\cdots=| B^{m}_i|=k_{i},$$ and the
sequence $\{k_{i}\}_{i\geq1}$ is strictly increasing;
 \item
for each $i\geq1$, $$|A^{1}_i|<\cdots<|A^{m}_i|,$$ and for each
$j< m$ the sequence $\{| A^{j+1}_i|-| A^{j}_i|\}_{i\geq1}$ is
strictly increasing;
 \item
for each $i\geq1$,
$$\sum_{j=1}^{m}a_j\overline{B^{j}_i}\equiv 0\  (mod\ b^{k_{i}});$$
 \item there
exists a positive constant $L$ such that $$| A^{j}_i|/|
B^{j}_i|\leq L$$ for $i\geq1,j\leq m$.
\end{enumerate}
\end{condition}

Let $\textbf{a}^{1},\cdots,\textbf{a}^{m}$ be $m$ infinite words
of elements from $\{0,1,\cdots,b-1\}$. The second condition is
about $\textbf{a}^{1},\cdots,\textbf{a}^{m}$ and the numbers
$a_1,a_2,\cdots a_m$:

 \begin{condition} \label{c5}
There exist $2m$ sequences of finite nonempty words
$$\{A^{1}_i\}_{i\geq1},\cdots,\{A^{m}_i\}_{i\geq1},
\{B^{1}_i\}_{i\geq1},\cdots,\{B^{m}_i\}_{i\geq1}$$ such that:
   \begin{enumerate}
\item  for each $i\geq1,j\leq m$, the word $A^{j}_iB^{j}_i$ is a
prefix of the word $\textbf{a}^{j}$ ;
 \item
for each $i\geq1$, $$|B^{1}_i|=\cdots=| B^{m}_i|=k_{i},$$ and the
sequence $\{k_{i}\}_{i\geq1}$ is strictly increasing;
 \item
for each $i\geq1$,
$$\sum_{j=1}^{m}a_j\overline{B^{j}_i}\equiv 0\  (mod\ b^{k_{i}});$$
 \item there
exists a positive constant $L$ such that $$| A^{j}_i|/|
B^{j}_i|\leq L$$ for $i\geq1,j\leq m$.
\end{enumerate}
\end{condition}

\begin{theorem} \label{T}
Let $\alpha$ be an irrational algebraic number, and let
$a_1,a_2,\cdots a_m$ be $m$ fixed non-zero integers. Then the
$b$-ary expansion of $\alpha$ does not satisfy Condition \ref{c3}.
\end{theorem}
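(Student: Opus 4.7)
The plan is to reduce Condition \ref{c3} to a divisibility statement that contradicts Theorem \ref{main6}. For each $i\geq 1$, form the $(m)$-array $D_i := \{d^i_j\}_{j=1}^{m}$ with $d^i_j := |A^j_i| + k_i$, and take as coefficients $c_j := a_j$, so that
$$f(D_i) := \sum_{j=1}^{m} a_j\bigl[\alpha\, b^{d^i_j}\bigr]$$
in the notation of Theorem \ref{main6}. The strategy is to show that $b^{k_i}$ divides $f(D_i)$ while Theorem \ref{main6} forces $G.C.D(f(D_i), b^{\lfloor D_i\rfloor}) < b^{k_i}$ for large $i$, a contradiction.

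First I would check uniform admissibility and that both $\lfloor D_i\rfloor$ and $\langle D_i\rangle$ tend to infinity. Strict monotonicity $d^i_1 < \cdots < d^i_m$ is immediate from clause (3) of Condition \ref{c3}. From $|A^j_i|/|B^j_i|\leq L$ and $|B^j_i| = k_i$ one gets $k_i \leq d^i_j \leq (L+1)k_i$, so every $D_i$ is $(L+1)$-admissible. Also $\lfloor D_i\rfloor = |A^1_i|+k_i \geq k_i \to\infty$ by clause (2), and by clause (3) each gap $|A^{j+1}_i|-|A^j_i|$ is strictly increasing in $i$, so $\langle D_i\rangle = \min_{j<m}(|A^{j+1}_i|-|A^j_i|)\to\infty$ as well.

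The arithmetic core uses clause (4). Because $A^j_iB^j_i$ is a prefix of $\textbf{a}$,
$$\bigl[\alpha\, b^{|A^j_i|+k_i}\bigr] = \overline{A^j_iB^j_i} = b^{k_i}\,\overline{A^j_i} + \overline{B^j_i},$$
so $[\alpha\, b^{d^i_j}]\equiv \overline{B^j_i}\pmod{b^{k_i}}$. Summing with weights $a_j$ and invoking clause (4) gives
$$f(D_i)\equiv \sum_{j=1}^{m}a_j\overline{B^j_i}\equiv 0\pmod{b^{k_i}},$$
and in particular $b^{k_i}\mid G.C.D(f(D_i),b^{\lfloor D_i\rfloor})$.

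To finish, choose $\epsilon := 1/(2(L+1))$ in Theorem \ref{main6}. For all sufficiently large $i$,
$$G.C.D(f(D_i),b^{\lfloor D_i\rfloor}) < b^{\epsilon\lfloor D_i\rfloor} \leq b^{\epsilon(L+1)k_i} = b^{k_i/2} < b^{k_i},$$
contradicting the divisibility above. The main obstacle is not conceptual but bookkeeping: the exponent $d^i_j = |A^j_i|+k_i$ must be chosen so that the low-order $k_i$ base-$b$ digits of $[\alpha b^{d^i_j}]$ coincide exactly with $\overline{B^j_i}$, which is what translates the combinatorial clause (4) into an arithmetic divisibility, and the bound $d^i_j \leq (L+1)k_i$ is essential so that $\epsilon(L+1) < 1$ produces the strict inequality $b^{\epsilon\lfloor D_i\rfloor} < b^{k_i}$. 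The degenerate case $m=1$ requires no separate treatment, since $\langle D_i\rangle$ and clause (3) are then vacuous and the same argument goes through via the $n=1$ instance of Theorem \ref{main6}.
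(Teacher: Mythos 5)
Your proposal is correct and follows essentially the same route as the paper: you form the $(L+1)$-admissible $(m)$-array with entries $d^i_j=|A^j_iB^j_i|$, translate clause (iv) of Condition \ref{c3} into $b^{k_i}\mid f(D_i)$ via $[\alpha b^{d^i_j}]\equiv\overline{B^j_i}\pmod{b^{k_i}}$, and contradict Theorem \ref{main6} using $\lfloor D_i\rfloor\leq(L+1)k_i$. The only difference is cosmetic (you take $\epsilon=1/(2(L+1))$ where the paper uses $1/(L+1)$), and your explicit verification of the digit-prefix identity is a welcome elaboration of a step the paper leaves implicit.
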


When $m=2$, $a_1=-a_2=1$, the congruence
$$\overline{B^{1}_i}\equiv \overline{B^{2}_i} (mod\ b^{k_{i}})$$ in Condition \ref{c3}
forces $B^{1}_i=B^{2}_i$. In this way we recover Theorem
\ref{main1} immediately.

Theorem \ref{T} and its variants immediately yield a large class
of new transcendence criteria. The following are two simplest
examples:

 Let $ \Phi$ be the set of finite words of length $\geq2$ on the
 alphabet
$\{0,1,\cdots,b-1\}$. Let $s$ be a nonzero integer. For any
$\textbf{a}\in\Phi$ of length $k$, let $\textbf{a}^{1}$ and
$\textbf{a}^{2}$ be the two sub-words of $\textbf{a}$ such that
$\textbf{a}^{1}\textbf{a}^{2}$ is the prefix of $\textbf{a}$ and
$$|\textbf{a}^{1}|=|\textbf{a}^{2}|=[k/2].$$ Let $\textbf{a}^{3}$
be the unique word of length $[k/2]$ in $ \Phi$ satisfying
$$\overline{\textbf{a}^{1}}+\overline{\textbf{a}^{2}}\equiv \overline{\textbf{a}^{3}} (mod\ b^{[k/2]}),$$
and let $\textbf{a}^{4}$ be the unique word of length $k$ in $
\Phi$ satisfying
$$s\overline{\textbf{a}}\equiv \overline{\textbf{a}^{4}} (mod\ b^{k}).$$
We define two operations $f$ and $g$ on $ \Phi$ by
$$f(\textbf{a})=\textbf{a}\textbf{a}^{3}$$ and
$$g(\textbf{a})=\textbf{a}\textbf{a}^{4}.$$ Let $F(\textbf{a})$ be
the limit of $f^{n}(\textbf{a})$, and let $G(\textbf{a})$ be the
limit of $g^{n}(\textbf{a})$ for $n\rightarrow +\infty$. Now
 Theorem \ref{T} immediately implies the following two transcendence
 criteria.
\begin{theorem} \label{TT}
For any $\textbf{a}\in \Phi$, set $$F(\textbf{a})=a_1a_2\cdots$$
and
$$G(\textbf{a})=a'_1a'_2\cdots.$$ Then neither
$$\sum_{i=0}^{\infty}a_{i}b^{-i}=a_{1}b^{-1}+a_{2}b^{-2}+\cdots$$
nor
$$\sum_{i=0}^{\infty}a'_{i}b^{-i}=a'_{1}b^{-1}+a'_{2}b^{-2}+\cdots$$
can be irrational algebraic number.
\end{theorem}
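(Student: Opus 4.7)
The plan is to reduce Theorem \ref{TT} to Theorem \ref{T}: if either $F(\textbf{a})$ or $G(\textbf{a})$ were an irrational algebraic number, I would show that its $b$-ary expansion satisfies Condition \ref{c3} for an explicit linear relation, contradicting Theorem \ref{T}.

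I will treat $G(\textbf{a})$ first. Set $u_n := g^n(\textbf{a})$ and $q_n := |u_n| = 2^n|\textbf{a}|$. By the definition of $g$, the block $u_n$ occupies positions $[1, q_n]$ and the block $u_n^{(4)}$ occupies positions $[q_n + 1, 2q_n]$ of the expansion of $G(\textbf{a})$, and they satisfy $s\overline{u_n} \equiv \overline{u_n^{(4)}} \pmod{b^{q_n}}$. The naive choice $B^1_i = u_n$, $B^2_i = u_n^{(4)}$ would force $A^1_i$ to be empty, violating Condition \ref{c3}. To fix this, I apply a one-digit shift: take $|A^1_i| = 1$, $|A^2_i| = q_n + 1$, and $|B^1_i| = |B^2_i| = q_n - 1$, so that each $B^j_i$ is the corresponding block with its leading digit removed. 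Dropping the leading digit of a length-$q_n$ word subtracts a multiple of $b^{q_n - 1}$, so the original congruence modulo $b^{q_n}$ descends cleanly to $s\overline{B^1_i} - \overline{B^2_i} \equiv 0 \pmod{b^{q_n - 1}}$. The remaining clauses of Condition \ref{c3} are immediate: $k_i = q_n - 1$ and $|A^2_i| - |A^1_i| = q_n$ are both strictly increasing in $n$, and the ratios $|A^j_i|/|B^j_i|$ are bounded. Theorem \ref{T} with $m = 2$, $(a_1, a_2) = (s, -1)$ then yields the contradiction.

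For $F(\textbf{a})$ the argument runs analogously with three blocks. Set $w_n := f^n(\textbf{a})$, $k_n := |w_n|$, $m_n := [k_n/2]$, and split the length-$2m_n$ prefix of $w_n$ as $w_n^{(1)} w_n^{(2)}$. The blocks $w_n^{(1)}$, $w_n^{(2)}$, $w_n^{(3)}$ occupy positions $[1, m_n]$, $[m_n + 1, 2m_n]$, $[k_n + 1, k_n + m_n]$ in the expansion of $F(\textbf{a})$ and satisfy $\overline{w_n^{(1)}} + \overline{w_n^{(2)}} \equiv \overline{w_n^{(3)}} \pmod{b^{m_n}}$. The same shift---$|A^1_i| = 1$, $|A^2_i| = m_n + 1$, $|A^3_i| = k_n + 1$, with each $|B^j_i| = m_n - 1$---drops a leading digit from each block and, by the same computation, descends the congruence to modulus $b^{m_n - 1}$. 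Since $k_n$ is strictly increasing, after passing to a subsequence both $m_n$ and $k_n - m_n$ are strictly increasing in $n$, giving the required strict monotonicity of the gaps $|A^2_i| - |A^1_i| = m_n$ and $|A^3_i| - |A^2_i| = k_n - m_n$; the ratios $|A^j_i|/|B^j_i|$ are bounded. Theorem \ref{T} with $m = 3$, $(a_1, a_2, a_3) = (1, 1, -1)$ completes the argument.

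The main---and essentially only---delicate point is the innocuous nonemptyness condition on $A^1_i$: without the one-digit shift the construction would hand us $A^1_i = \emptyset$. The shift is harmless because it removes a single high-order digit from each block, an amount that is absorbed by the extra factor of $b$ in the original congruence. Beyond that, the proof is a bookkeeping verification that the iterative structures of $f$ and $g$ manufacture the long block-pattern required by Condition \ref{c3}.
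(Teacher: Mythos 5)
Your proposal is correct and follows exactly the route the paper intends: the paper offers no written proof of Theorem \ref{TT}, asserting only that it is an immediate consequence of Theorem \ref{T}, and your reduction---reading off the blocks $B^j_i$ from the iterates of $f$ and $g$ and verifying Condition \ref{c3} with $(a_1,a_2)=(s,-1)$ and $(a_1,a_2,a_3)=(1,1,-1)$---is precisely that deduction. Your one-digit shift to keep $A^1_i$ nonempty is a genuine detail the paper glosses over, and your verification that the shifted congruence survives modulo $b^{k_i-1}$ is sound.
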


\begin{remark}\label{remar}
In fact, both
$$\sum_{i=0}^{\infty}a_{i}b^{-i}=a_{1}b^{-1}+a_{2}b^{-2}+\cdots$$
and
$$\sum_{i=0}^{\infty}a'_{i}b^{-i}=a'_{1}b^{-1}+a'_{2}b^{-2}+\cdots$$
 in the above theorem are transcendental except for some extreme cases.
\end{remark}

Now we consider simultaneous $b$-ary expansions of several
algebraic numbers.

\begin{theorem} \label{TT1}
Let $\alpha_{1},\alpha_{2},\cdots,\alpha_{m}$ be $m$ algebraic
numbers, such that $$1,\alpha_{1},\alpha_{2},\cdots,\alpha_{m}$$
are linearly independent over $\mathbb{Q}$. Let $a_1,a_2,\cdots
a_m$ be $m$ fixed non-zero integers. Then   Condition \ref{c5} is
not satisfied by the $b$-ary expansions of $\alpha$
$\alpha_{1},\alpha_{2},\cdots,\alpha_{m}$.
\end{theorem}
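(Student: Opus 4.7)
The plan is to derive a contradiction from Condition \ref{c5} via a direct application of Theorem \ref{main5}. Suppose Condition \ref{c5} holds, and for every $n\geq 1$ and $1\leq j\leq m$ set $d^{n}_{j}=|A^{j}_{n}|+k_{n}$. Since $B^{j}_{n}$ occupies digits $|A^{j}_{n}|+1,\ldots,d^{n}_{j}$ of the $b$-ary expansion of $\alpha_{j}$, a short computation from the definition of $[\,\cdot\,]$ gives the identity
\begin{equation*}
\overline{B^{j}_{n}}\;=\;[\alpha_{j}b^{d^{n}_{j}}]-b^{k_{n}}[\alpha_{j}b^{|A^{j}_{n}|}].
\end{equation*}
Plugging this into the congruence $\sum_{j}a_{j}\overline{B^{j}_{n}}\equiv 0\pmod{b^{k_{n}}}$ of item (iii) and discarding the terms that are plainly multiples of $b^{k_{n}}$, I obtain the key divisibility
\begin{equation*}
b^{k_{n}}\ \Big|\ \sum_{j=1}^{m}a_{j}[\alpha_{j}b^{d^{n}_{j}}].
\end{equation*}

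Next I would package the integers $d^{n}_{j}$ as input for Theorem \ref{main5}. View $D_{n}=\{d^{n}_{j}\}_{j=1}^{m}$ as a $(\underbrace{1,\ldots,1}_m)$-array with fixed coefficient array $c_{j,1}=a_{j}\in\mathbb{Z}\setminus\{0\}$; by item (iv) of Condition \ref{c5} we have $|A^{j}_{n}|\leq Lk_{n}$, hence $k_{n}+1\leq d^{n}_{j}\leq (L+1)k_{n}$, so each $D_{n}$ is $(L+2)$-admissible with $\lfloor D_{n}\rfloor\geq k_{n}\to+\infty$ (while $\langle D_{n}\rangle$ is vacuously infinite because every row has exactly one entry). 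Set
\begin{equation*}
f(D)=\sum_{j=1}^{m}a_{j}\alpha_{j}b^{d_{j}},\qquad P(D)=-\sum_{j=1}^{m}a_{j}\{\alpha_{j}b^{d_{j}}\},
\end{equation*}
so that $|P(D)|\leq\sum_{j}|a_{j}|=:M$ and $f(D)+P(D)=\sum_{j}a_{j}[\alpha_{j}b^{d_{j}}]$ is always an integer. The key divisibility then reads: $b^{k_{n}}$ divides $[f(D_{n})+P(D_{n})]=f(D_{n})+P(D_{n})$.

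Finally, since $k_{n}\leq\lfloor D_{n}\rfloor$, both $[f(D_{n})+P(D_{n})]$ and $b^{\lfloor D_{n}\rfloor}$ are divisible by $b^{k_{n}}$, so
\begin{equation*}
G.C.D\bigl([f(D_{n})+P(D_{n})],\,b^{\lfloor D_{n}\rfloor}\bigr)\ \geq\ b^{k_{n}}\ \geq\ b^{\lfloor D_{n}\rfloor/(L+2)}.
\end{equation*}
Applying Theorem \ref{main5} with $\epsilon=1/(2(L+2))$, admissibility constant $L+2$, bound $M$, coefficient array $c_{j,1}=a_{j}$, and the $\alpha_{j}$'s (which together with $1$ are linearly independent over $\mathbb{Q}$ by hypothesis) gives the opposing bound $b^{\epsilon\lfloor D_{n}\rfloor}\leq b^{k_{n}/2}$ for all sufficiently large $n$, whence $b^{k_{n}}\leq b^{k_{n}/2}$, a contradiction. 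The one delicate point in the argument is the step that converts the modular condition on the block-values $\overline{B^{j}_{n}}$ into a divisibility statement for $\sum_{j}a_{j}[\alpha_{j}b^{d^{n}_{j}}]$; the non-integer discrepancies $\{\alpha_{j}b^{d^{n}_{j}}\}$ that appear there are exactly what the bounded correction function $P$ in Theorem \ref{main5} is designed to absorb, which is why we cannot invoke the coefficient-free Theorem \ref{main7} directly here.
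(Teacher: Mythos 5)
Your proof is correct and follows essentially the same route as the paper: set $d^{n}_{j}=|A^{j}_{n}B^{j}_{n}|$, convert the congruence on the $\overline{B^{j}_{n}}$ into the divisibility $b^{k_{n}}\mid\sum_{j}a_{j}[\alpha_{j}b^{d^{n}_{j}}]$, and play the resulting lower bound $G.C.D(\cdot,b^{\lfloor D_{n}\rfloor})\geq b^{k_{n}}\geq b^{\lfloor D_{n}\rfloor/(L+2)}$ against the paper's GCD upper bound. The only deviation is that you invoke Theorem \ref{main5} directly with coefficient array $c_{j,1}=a_{j}$ and $P(D)=-\sum_{j}a_{j}\{\alpha_{j}b^{d_{j}}\}$ rather than Theorem \ref{main7} as the paper does; since Theorem \ref{main7} as stated carries no coefficients $a_{j}$, your choice is the more careful one and quietly repairs a small imprecision in the paper's one-line reduction.
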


When $m=2$, $a_1=-a_2=1$,  Condition \ref{c5} reduces to Condition
\ref{c1}.

$\newline$

It can be derived directly from Theorem \ref{main5} that Theorems
\ref{T} and \ref{TT1} have a common generalization of mixed type.
It is a little tedious to write down the full result. A simplest
case says that:

\begin{theorem} \label{TT2}
Let $a_1,a_2,a_3$ be three fixed non-zero integers. Let $\alpha$
and $\alpha'$ be two algebraic numbers, such that
$1,\alpha,\alpha'$ are linearly independent over $\mathbb{Q}$.
Then their $b$-ary expansions $$\alpha=0.a_1a_2\cdots,$$ and
$$\alpha'=0.a'_1a'_2\cdots$$ can not satisfy Condition \ref{c9}
\end{theorem}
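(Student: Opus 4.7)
The plan is to derive Theorem~\ref{TT2} directly from Theorem~\ref{main5}, imitating the derivations of Theorems~\ref{T} and \ref{TT1}. Suppose for contradiction that Condition~\ref{c9} is satisfied. By assumption, for each $i\geq 1$ we have three blocks $B^1_i,B^2_i,B^3_i$ of a common length $k_i\to\infty$, where $B^j_i$ occurs at positions $|A^j_i|+1,\dots,|A^j_i|+k_i$ in the $b$-ary expansion of some $\gamma^{(j)}\in\{\alpha,\alpha'\}$ (the assignment being built into \ref{c9}), and
$$a_1\overline{B^1_i}+a_2\overline{B^2_i}+a_3\overline{B^3_i}\equiv 0\pmod{b^{k_i}}.$$
Using the identity $\overline{B^j_i}=[b^{|A^j_i|+k_i}\gamma^{(j)}]-b^{k_i}[b^{|A^j_i|}\gamma^{(j)}]$ and discarding the $b^{k_i}$-multiples, the congruence rewrites as
$$\sum_{j=1}^{3}a_j[b^{|A^j_i|+k_i}\gamma^{(j)}]\equiv 0\pmod{b^{k_i}}.$$

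Next I would package the left-hand side in the form $f(D_i)+P(D_i)$ required by Theorem~\ref{main5}. After grouping summands by which of $\alpha,\alpha'$ appears in them, $D_i$ becomes either a $(2,1)$- or $(1,2)$-array---depending on which of the two algebraic numbers hosts two of the three blocks---with entries $|A^j_i|+k_i$ and with the $c_{k,l}$ equal to the corresponding $a_j$. Writing each $[\cdot]$ as $(\cdot)-\{\cdot\}$ shows that $|P(D_i)|\le|a_1|+|a_2|+|a_3|=:M$. Clause~(5) of~\ref{c9} gives $|A^j_i|\le Lk_i$, hence $k_i\le |A^j_i|+k_i\le (L+1)k_i$, so $D_i$ is $(L+1)$-admissible and $\lfloor D_i\rfloor\ge k_i$. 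The clause of~\ref{c9} analogous to clause~(3) of Condition~\ref{c3} (growing gaps between the positions of any two blocks that share a host algebraic number) forces $\langle D_i\rangle\to\infty$. Applying Theorem~\ref{main5} with any $\epsilon<1/(L+1)$ yields
$$G.C.D\bigl([f(D_i)+P(D_i)],\,b^{\lfloor D_i\rfloor}\bigr)<b^{\epsilon\lfloor D_i\rfloor}$$
for all sufficiently large $i$. But the displayed congruence shows that $b^{k_i}$ divides $[f(D_i)+P(D_i)]$, and $k_i\ge\lfloor D_i\rfloor/(L+1)>\epsilon\lfloor D_i\rfloor$, a contradiction.

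The main obstacle is verifying that the hypotheses of Theorem~\ref{main5}---particularly $\langle D_i\rangle\to\infty$---are genuinely implied by whichever precise formulation of Condition~\ref{c9} one adopts. Since $\langle D\rangle$ in Theorem~\ref{main5} only sees within-row differences, it suffices for~\ref{c9} to impose growing gaps between positions of blocks that share a $\gamma^{(j)}$; pairs of blocks coming from different algebraic numbers lie in distinct rows and are irrelevant to $\langle D\rangle$. Once this bookkeeping is right, the linear independence of $1,\alpha,\alpha'$ over $\mathbb{Q}$ (exactly what is assumed) legitimises the invocation of Theorem~\ref{main5}, and the rest of the argument is purely formal.
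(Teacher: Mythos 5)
Your derivation is correct and is precisely the argument the paper intends: the paper does not write out a proof of Theorem~\ref{TT2}, stating only that it ``can be derived directly from Theorem~\ref{main5}'' as a mixed-type analogue of Theorems~\ref{T} and~\ref{TT1}, and your reduction to a $(2,1)$-array with $d_{i,j}=|A^j_i|+k_i$, coefficients $c_{i,j}=a_j$, and $P$ absorbing the fractional parts is exactly the same bookkeeping used in the paper's written proofs of those two theorems. The key verifications --- $(L+1)$-admissibility, $\lfloor D_i\rfloor\geq k_i\geq\lfloor D_i\rfloor/(L+1)$, and $\langle D_i\rangle\to\infty$ from clause (v) of Condition~\ref{c9} --- are all handled correctly.
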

 \begin{condition} \label{c9}
There exist six sequences of finite nonempty words
$$\{A^{1}_i\}_{i\geq1},\{A^{2}_i\}_{i\geq1},\{A^{3}_i\}_{i\geq1},
\{B^{1}_i\}_{i\geq1},\{B^{2}_i\}_{i\geq1},\{B^{3}_i\}_{i\geq1}$$
such that:
   \begin{enumerate}
\item  for each $i\geq1$, both $A^{1}_iB^{1}_i$ and
$A^{2}_iB^{2}_i$ are prefixes of $a_1a_2\cdots$ ; \item  for each
$i\geq1$,  $A^{3}_iB^{3}_i$ is a prefix of $a'_1a'_2\cdots$ ;
 \item
for each $i\geq1$, $$|B^{1}_i|=|B^{2}_i|=| B^{2}_i|=k_{i},$$ and
the sequence $\{k_{i}\}_{i\geq1}$ is strictly increasing;
 \item
for each $i\geq1$,
$$\sum_{j=1}^{3}a_j\overline{B^{j}_i}\equiv 0\  (mod\ b^{k_{i}});$$
\item for each $i\geq1$, $|A^{1}_i|<|A^{2}_i|,$ and the sequence
$\{|A^{2}_i|-|A^{1}_i|\}_{n\geq1}$ is strictly increasing;
 \item there
exists a positive constant $L$ such that $$| A^{j}_i|/|
B^{j}_i|\leq L$$ for $i\geq1,j\leq 3$.
\end{enumerate}
\end{condition}

 \section{proofs of the main results}

As in \cite{ab,ab1}, our proofs are dependent upon the following
$p$-adic version of the Schmidt subspace theorem \cite{sc,sch}.
Let $|\cdot|_{p}$ is the $p$-adic absolute value on $\mathbb{Q}$,
normalized by $|p|_{p}=p^{-1}$. we pick an extension of
$|\cdot|_{p}$ to $\overline{\mathbb{Q}}$.

 \begin{theorem} \label{main8}
Let $n\geq1$ be an integer, and let $S$ be a finite set of places
on $\mathbb{Q}$ containing the infinite place. For every
$$\textbf{x}=(x_0,\cdots,x_n)\in\mathbb{Z}^{n+1},$$ set
$$\|\textbf{x}\|=\max_{i}\{|x_{i}|\}.$$ For every $p\in S$,
let $L_{0,p}(\textbf{x}),\cdots,L_{n,p}(\textbf{x})$ be linearly
independent linear forms in $n+1$ variables with algebraic
coefficients. Then for any positive number $\epsilon$, the
solutions $\textbf{x}\in\mathbb{Z}^{n+1}$ of the inequality
$$\prod_{p\in S}\prod_{i=1}^{n+1}|L_{i,p}(\textbf{x})|_{p}\leq\|\textbf{x}\|^{-\epsilon}$$ lie in finitely many proper linear subspaces of
$\mathbb{Q}^{n+1}$
\end{theorem}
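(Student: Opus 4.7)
The theorem is the $p$-adic Schmidt subspace theorem, in the form proved by Schmidt in the archimedean case and extended to arbitrary finite sets of places by Schlickewei. A complete proof is far beyond what could fit in this paper, and the plan here is essentially to cite \cite{sc,sch} and use the statement as a black box for the derivation of Theorem \ref{main4}. Nevertheless, one can outline the strategy that such proofs follow.

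The argument proceeds by induction on the dimension $n$. The base case $n=1$ reduces, after unwinding the product formula, to a Roth--Ridout type Diophantine approximation statement, which already yields finiteness of the solutions. For the inductive step, the geometry of numbers takes over. To each integral solution $\textbf{x}$ of the inequality one associates an adelic parallelepiped cut out by bounds of the form $|L_{i,p}(\textbf{y})|_p\le c_{i,p}$, with constants $c_{i,p}$ chosen so that $\textbf{x}$ itself becomes a small vector of this body. One then studies its successive minima $\lambda_1\le\cdots\le\lambda_{n+1}$ relative to $\mathbb{Z}^{n+1}$ via Minkowski's second theorem.

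The key steps, in order, would be: (i) reduce to the case of algebraic integer coefficients and a fixed finite set $S$ containing the infinite place; (ii) associate to each putative solution $\textbf{x}$ the adelic parallelepiped above, renormalising by the height $\|\textbf{x}\|$; (iii) combine Minkowski's theorem with Davenport's lemma on parallelepipeds to compare the successive minima against $\|\textbf{x}\|$; (iv) invoke a Roth-type index theorem to force algebraic dependence as soon as several minima are small; (v) conclude by a pigeonhole argument over the finitely many resulting ``shape classes'' of lattices, each producing a proper linear subspace into which the corresponding solutions must fall.

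The main obstacle is step (iv). It is the deepest input and the entire reason the theorem is non-trivial: it packages the sharpest known bounds on how well an algebraic number can be simultaneously approximated across several completions of $\mathbb{Q}$. Classically it is handled by Roth's construction of an auxiliary polynomial of high degree whose zeros have prescribed index at the approximating points, combined with a non-vanishing lemma of Wronskian type; more modern treatments route through Faltings' product theorem. Since Theorem \ref{main8} is used here purely as a tool, none of this inner machinery needs to be unpacked in the present paper.
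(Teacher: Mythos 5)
You correctly identify that Theorem \ref{main8} is the $p$-adic Schmidt subspace theorem of Schlickewei and Schmidt, which the paper does not prove but simply cites from \cite{sc,sch} and uses as a black box; your treatment matches the paper's exactly. The additional sketch of the proof strategy via geometry of numbers and Roth's index machinery is accurate background but plays no role in the paper itself.
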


\begin{proof}[\textbf{Proof of Theorem \ref{main4}}]
For the infinite place $\infty$ and for each prime $p\mid b$, we
will introduce $n+1$ linearly independent linear forms as follows:

For every
$$\textbf{x}=(x_0,\cdots,x_n)\in\mathbb{Z}^{n+1},$$ set $$L_{0,\infty}(\textbf{x})=-x_0+\sum_{i=1}^{n}\alpha_i
 x_{i},$$ and for each $1\leq
i\leq n$, set $$L_{i,\infty}(\textbf{x})= x_{i}.$$ For each prime
$p\mid b$ and each $0\leq i\leq n$, set $$L_{i,p}(\textbf{x})=
x_{i}.$$

Assume that there exists an infinite sequence
$\{D_{k}\}_{k\geq1}=\{\{d_{i,1,k}\}_{i}\}_{k\geq1}$ of
$L$-admissible $(\underbrace{1,\cdots,1}_n)$-arrays, such that
both
 $\lfloor D_{k}\rfloor$ and $\langle\overline{
D_{k}}\rangle$ tend to infinity, and $$R_{k}\geq
b^{\epsilon\lfloor D_{k}\rfloor},$$  where
$$Q_{k}=[f(D_{k})+P(D_{k})],$$ and $$R_{k}=G.C.D(Q_{k},b^{\lfloor
D_{k}\rfloor}).$$

Set
$$\textbf{X}_{k}=\tfrac{1}{R_{k}}(Q_{k},b^{d_{1,1,k}},\cdots,b^{d_{n,1,k}})\in\mathbb{Z}^{n+1}.$$
 Then we have $$|
 L_{0,\infty}(\textbf{X}_{k})|=|\tfrac{1}{R_{k}}(f(D_{k})-[f(D_{k})+P(D_{k})])|<|\tfrac{2(M+1)}{R_{k}}|.$$
Direct estimation shows that $$\|\textbf{X}_{k}\|<
b^{2\max_{i}(d_{i,1,k})}\leq b^{2L\lfloor
 D_{k}\rfloor},$$ for $k$
sufficiently large.  By product formula (cf.\cite[p.99]{la}),
 $$| L_{i,\infty}(\textbf{X}_{k})|\prod_{p\mid b}| L_{i,p}(\textbf{X}_{k})|_p=| \tfrac{b^{d_{i,1,k}}}{R_{k}}|\prod_{p\mid b}| \tfrac{b^{d_{i,1,k}}}{R_{k}}|_p=1,$$
for $i=1,\cdots,n$.
 Hence $$\prod_{p=\infty \atop or \  p\mid b}
\prod_{0\leq i\leq n}|
 L_{i,p}(\textbf{X}_{k})|_p<|\tfrac{2(M+1)}{R_{k}}|\leq b^{-\tfrac{1}{2}\epsilon\lfloor
D_{k}\rfloor}<\|\textbf{X}_{k}\|^{-\tfrac{\epsilon}{4L}}$$ for $k$
sufficiently large. Now by  Theorem \ref{main8}, there exist a
nonzero element $(e_{0},e_{1},\cdots,e_{n})\in \mathbb{Z}^{n+1}$,
and an infinite subset $\mathbb{N}'$ of $\mathbb{N}$ such that

\begin{equation}\label{for5}e_{0}Q_{k}+\sum_{i=1}^{n}e_{i}
b^{d_{i,1,k}}=0,\end{equation}
 for each $k\in \mathbb{N}'$.

 As  $\langle\overline{
D_{k}}\rangle$ tends to infinity, we can choose an infinite subset
$\mathbb{N}''$ of $\mathbb{N}'$, and a permutation
$\{s_1,\cdots,s_n\}$
 of $\{1,\cdots,n\}$, satisfying:
   \begin{enumerate}
\item  for each $k\in \mathbb{N}''$,
$$d_{s_1,1,k}>\cdots>d_{s_n,1,k};$$
 \item
 for each $i<n$,  $$\lim_{k\in \mathbb{N}'' \atop
k\rightarrow\infty} d_{s_i,1,k}-d_{s_{i+1},1,k}=+\infty.$$
\end{enumerate}
Now dividing Formula (\ref{for5}) by $b^{d_{s_1,1,k}}$ and letting
$k$ tend to infinity along  $\mathbb{N}''$, we obtain
$$\alpha_{s_1}e_{0}+e_{s_1}=0.$$ Hence, by the  irrationality of $\alpha_{1}$,  $$e_{0}=e_{s_1}=0.$$ Finally,
dividing Formula (\ref{for5}) by
$b^{d_{s_2,1,k}},\cdots,b^{d_{s_n,1,k}} $ in turn, we get
$$e_{s_2}=\cdots=e_{s_n}=0.$$ This concludes the proof of the
theorem.
\end{proof}
\begin{remark}\label{remariks}
The above proof uses only the irrationality of $\alpha_{s_1}$.
Thus if we require all the $L$-admissible
$(\underbrace{1,\cdots,1}_n)$-arrays $\{d_{i,1}\}$ to satisfy
$$d_{1,1}>d_{2,1}>\cdots > d_{n,1},$$
 then Theorem \ref{main4} still holds if we only assume the  irrationality of $\alpha_{1}$.
\end{remark}

\begin{proof}[\textbf{Proof of Theorem \ref{main5}}]
Assume that there exists an infinite sequence
$$\{D_{k}\}_{k\geq1}=\{\{d_{i,j,k}\}_{i,j}\}_{k\geq1}$$ of $L$-admissible
$(a_1,a_2,\cdots, a_n)$-arrays, such that both
 $\lfloor D_{k}\rfloor$ and $\langle
D_{k}\rangle$ tend to infinity, and
\begin{equation}\label{for6}R_{k}\geq b^{\epsilon\lfloor
D_{k}\rfloor},\end{equation}   where we set
 $$R_{k}=G.C.D([f(D_{k})+P(D_{k})],b^{\lfloor
D_{k}\rfloor}).$$ We can choose an infinite subset $\mathbb{N}'$
of $\mathbb{N}$,  a partition of the set $$\Phi=\{(i,j)|1\leq
i\leq n,1\leq j\leq a_i\}=\bigcup_{s=1}^{t}U_{s},$$ and a fixed
$(i_{s},j_{s})\in U_{s}$ for each $s$, such that for each $k\in
\mathbb{N}'$,
   \begin{enumerate}
\item  $\min_{(i,j)\in U_{s}}(d_{i,j,k})=d_{i_{s},j_{s},k}$,
$0<s\leq t$;
 \item
$d_{i,j,k}-d_{i_{s},j_{s},k}$ is independent of $k$, when $0<s\leq
t$ and $(i,j)\in U_{s}$;
 \item
 the sequence $\{\mid d_{i,j,k}-d_{i_{s},j_{s},k}\mid\}_{k\in
\mathbb{N}'}$ tends to infinity, when $0<s\leq t$ and $(i,j)\notin
U_{s}$.
\end{enumerate}
As $\langle D_{k}\rangle$ tend to infinity, we see that
$(i,j),(i',j')\in U_{s}$ and $(i,j)\neq(i',j')$ imply $i\neq i'$.
Hence by the assumption on
$\alpha_{1},\alpha_{2},\cdots,\alpha_{n}$,
$$\beta_s=\sum_{(i,j)\in U_{s}}\alpha_ic_{i,j}
b^{d_{i,j,k}-d_{i_{s},j_{s},k}},$$ is an irrational algebraic
number when $k\in \mathbb{N}'$. For $k\in \mathbb{N}'$ we have
$$f(D_{k})=\sum_{s=1}^{t}\beta_s d_{i_{s},j_{s},k}.$$ Set $E_{k}=\{d_{i_{s},j_{s},k}\}_s$. Then $\lfloor E_{k}\rfloor=\lfloor D_{k}\rfloor$  when $k\in \mathbb{N}'$. Hence both
 $\lfloor E_{k}\rfloor$ and $\langle\overline{
E_{k}}\rangle$ tend to infinity along $\mathbb{N}'$. Now applying
Theorems \ref{main4} to $\beta_1,\cdots,\beta_t$ and
$\{E_{k}\}_{k\in \mathbb{N}'}$ implies
$$R_{k}<b^{\epsilon\lfloor
E_{k}\rfloor}= b^{\epsilon\lfloor D_{k}\rfloor},$$
 when $k\in \mathbb{N}'$ is sufficiently large.
This contradicts inequality (\ref{for6}).
\end{proof}

\begin{proof}[\textbf{Proofs of Theorems \ref{main6} and \ref{main7}}]
Theorem \ref{main6} follows by applying Theorem \ref{main5} to the
case $n=1$ and
$$P(D)=P(\{d_{j}\})=-\sum_{j=1}^{a_1}(c_{j}\{\alpha b^{d_{j}}\}+\{c_{j}[\alpha b^{d_{j}}]\}).$$
Theorem \ref{main7} follows by applying Theorems \ref{main5} to
the case $$a_1=\cdots=a_n=1$$ and
$$P(D)=P(\{d_{i}\})=-\sum_{i=1}^{n}\{\alpha_i
b^{d_{i}}\}.$$
\end{proof}

\begin{proof}[\textbf{Proofs of Theorems \ref{T} and \ref{TT1}}]
We prove Theorem \ref{T} first.  Assume that the $b$-ary expansion
of $\alpha$ satisfies Condition \ref{c3}. Set
$d_{i,j}=|A^{j}_iB^{j}_i|$. Then each $D_{i}=\{d_{i,j}\}_j$ is  an
$(L+1)$-admissible $(m)$-array by (\romannumeral5) of Condition
\ref{c3}. It follows from (\romannumeral2) and (\romannumeral3) of
Condition \ref{c3} that both the sequences $\{\lfloor
D_{i}\rfloor\}_{i\geq1}$ and $\{\langle D_{i}\rangle\}_{i\geq1}$
are strictly increasing. (\romannumeral4) of Condition \ref{c3}
implies
$$f(D_{i})=\sum_{j=1}^{m}a_{j}[\alpha  b^{d_{i,j}}]$$ is divisible
by $b^{k_{i}}$, hence
$$G.C.D(f(D_{i}),b^{\lfloor D_{i}\rfloor})\geq b^{k_{i}}\geq b^{\tfrac{1}{L+1}\lfloor
D_{i}\rfloor}$$ for each $i\geq1$. On the other hand, applying
Theorem \ref{main6} to $f(D_{i})$ and the sequence $\{
D_{i}\}_{i\geq1}$ implies $$G.C.D(f(D_{i}),b^{\lfloor
D_{i}\rfloor})< b^{\tfrac{1}{L+1}\lfloor D_{i}\rfloor},$$ when $i$
is sufficiently large. This concludes the proof of Theorem
\ref{T}. Theorem \ref{TT1} follows from Theorem \ref{main7} in
exactly the same way.
\end{proof}

\begin{proof}[\textbf{Proof of Theorem \ref{main2}}]
Assume that the $b$-ary expansions of $\alpha$ and $\alpha'$
satisfy Condition \ref{c1}. It follows from Theorem \ref{TT1} that
$1,$ $\alpha$ and $\alpha'$ are not linearly independent over
$\mathbb{Q}$. Thus there exists a nonzero element $(x,y,z)$ in
$\mathbb{Z}^{3}$, such that
\begin{equation}\label{for7}x+y\alpha+z\alpha'=0.\end{equation}
 The above equality implies $y,z\neq0$.
Set $|A_nB_n|=l_{n},$ $|A'_nB_n|=l'_{n},$ and $|B_n|=k_{n}.$
Without loss of generality, we can choose an infinite subset
$\mathbb{N}'$ of $\mathbb{N}$ such that either $l_{n}-l'_{n}$ is a
fixed  integer $s$, or $\{l_{n}-l'_{n}\}$ tends to $+\infty$ along
$ \mathbb{N}'$. In the second case, applying Theorem \ref{main4}
to the sum $[\alpha b^{l_{n}}]-[\alpha' b^{l'_{n}}]$ implies
$$G.C.D([\alpha b^{l_{n}}]-[\alpha'
b^{l'_{n}}],b^{l'_{n}})<b^{\tfrac{1}{L+1}l'_{n}}\leq b^{k_{n}},$$
when $n\in \mathbb{N}'$ is sufficiently large; this contradicts
(\romannumeral1) of Condition \ref{c1}. Hence we assume that
$l_{n}-l'_{n}$ is a fixed positive integer $s$ for $n\in
\mathbb{N}'$. We have
$$[\alpha b^{l_{n}}]-[\alpha' b^{l'_{n}}]=(\tfrac{z\alpha
b^s+y\alpha+x}{z})b^{l'_{n}}-(\{\alpha b^{l_{n}}\}-\{\alpha'
b^{l'_{n}}\})$$ when $n\in \mathbb{N}'$. If
$$z\alpha b^s+y\alpha\neq0,$$ we can apply Theorem \ref{main4}
to get a contradiction as above. Hence
\begin{equation}\label{for8}z b^s+y=0.\end{equation} Now (\romannumeral1) of Condition \ref{c1} implies
that
$$[\alpha b^{l_{n}}]-[\alpha'
b^{l'_{n}}]=[\tfrac{x}{z}b^{l'_{n}}]-\theta_{n}$$ is divisible by
$b^{k_{n}}$ when $n\in \mathbb{N}'$ is sufficiently large, where
$|\theta_{n}|\leq3$. This implies the $b$-ary expansion of the
rational number $\tfrac{x}{z}$ has arbitrary long block of zero or
$b-1$. On the other hand, it is well-know that the $b$-ary
expansion of a rational number is eventually periodic. This forces
$\tfrac{x}{z}b^i$ to be integer for some positive integer $i$.
This fact combining with equalities $(\ref{for7})$ and
$(\ref{for8})$ implies that the $b$-ary expansions of $\alpha$ and
$\alpha'$ have the same tail.
\end{proof}

 \section{a generalized Borel
conjecture and some other questions} In this section, we collect
some problems and open questions for future work.
 \begin{enumerate}
\item [(i)]  Theorem \ref{TT1} implies that $b$-ary expansions of
linearly independent irrational algebraic numbers are quite
independent. Motivated by this result, we propose a generalized
Borel conjecture.

Let $\xi=(\alpha_{1},\alpha_{2},\cdots,\alpha_{n})$ be $n$-tuple
of real numbers and let {\setlength{\arraycolsep}{0pt}
\begin{eqnarray*}
&&\alpha_{1}=[\alpha_{1}]+0.a_{1,1}a_{1,2}a_{1,3}\cdots\\
&&\alpha_{2}=[\alpha_{2}]+0.a_{2,1}a_{2,2}a_{2,3}\cdots\\
&&\  \  \   \  \  \  \    \  \  \   \  \  \  \   \ \  \  \   \
\vdots
\\
&&\alpha_{n}=[\alpha_{n}]+0.a_{n,1}a_{n,2}a_{n,3}\cdots\\
\end{eqnarray*}
} be their $b$-ary expansions. For two positive integers $m,N$ and
each $n\times m$-matrix $D$ of elements from $\{0,1,\cdots,b-1\}$,
set
$$A_{b}(D,N,\xi):=Card\{i|1\leq i\leq N,D_i=D\},$$
where
\[\begin{matrix}
D_i=\begin{pmatrix}a_{1,i}&a_{1,i+1}&\cdots&a_{1,i+m-1}\\a_{2,i}&a_{2,i+1} & \cdots & a_{2,i+m-1}\\
\hdotsfor{4}
\\a_{n,i}&a_{n,i+1}&\cdots&a_{n,i+m-1}\end{pmatrix}.\\
\end{matrix}\]
The $n$-tuple of real numbers
$\xi=(\alpha_{1},\alpha_{2},\cdots,\alpha_{n})$ is called $normal$
to base $b$ if for each $m\geq1$, each $n\times m$-matrix $D$ of
elements from $\{0,1,\cdots,b-1\}$ occurs in the $b$-ary expansion
of $\xi$ with the frequency $\tfrac{1}{b^{mn}}$, that is, if
$$\lim_{N\rightarrow+\infty}\frac{A_{b}(D,N,\xi)}{N}=\tfrac{1}{b^{mn}},$$ for each $m\geq1$, and each $n\times m$-matrix $D$ of
elements from $\{0,1,\cdots,b-1\}$.

The proof of Borel'theorem by the law of large numbers in
\cite[p.110]{c} can be used directly to show that:
 \begin{theorem} \label{main11}
Almost all $n$-tuples of real numbers are normal to base $b$.
\end{theorem}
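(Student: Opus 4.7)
The plan is to mimic the classical proof of Borel's normality theorem, passing from single real numbers to $n$-tuples. I would equip $[0,1)^n$ with Lebesgue measure and regard $\xi=(\alpha_{1},\ldots,\alpha_{n})$ as a uniformly distributed random vector; the full doubly-indexed family of digits $\{a_{i,j}\}_{1\leq i\leq n,\, j\geq 1}$ then becomes a countable collection of independent random variables, each uniformly distributed on $\{0,1,\ldots,b-1\}$. This is just the joint analogue of the classical fact that the digits of a single uniform random variable on $[0,1)$ are i.i.d., and it follows directly from Fubini combined with the one-dimensional statement.

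Next, I would fix $m\geq 1$ and a particular $n\times m$ digit matrix $D$, and define indicator random variables $X_{i}=\mathbf{1}_{\{D_{i}=D\}}$ for $i\geq 1$. By the independence established in the previous step, each $X_{i}$ has mean $b^{-mn}$, so that the goal is the almost-sure convergence
$$\frac{1}{N}\sum_{i=1}^{N} X_{i}\longrightarrow b^{-mn}.$$
The only subtle point is that $X_{i}$ and $X_{i+1}$ share $m-1$ columns of digits and are therefore not independent. The standard device is to split the indices modulo $m$: for each residue $r\in\{0,1,\ldots,m-1\}$ the subfamily $\{X_{km+r}\}_{k\geq 0}$ depends on pairwise disjoint blocks of digits and is i.i.d., so the strong law of large numbers applies to each of the $m$ arithmetic progressions. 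Summing the $m$ resulting limits yields the claimed Cesàro average.

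Finally, intersecting over the countable collection of pairs $(m,D)$ produces a set of full Lebesgue measure in $[0,1)^n$ on which every block frequency converges to $b^{-mn}$, i.e.\ every such $\xi$ is normal. The passage from $[0,1)^n$ to $\mathbb{R}^n$ is immediate, since normality depends only on the fractional parts of the coordinates, so the set of non-normal $n$-tuples in $\mathbb{R}^n$ is the $\mathbb{Z}^n$-translate of a null set and is therefore still null. The argument contains no real obstacle; the only bit of care needed is the decomposition into residue classes modulo $m$ to recover independence, exactly as in the one-dimensional case of \cite{c}.
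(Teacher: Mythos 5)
Your argument is correct and is exactly the route the paper intends: the paper gives no details, simply asserting that the law-of-large-numbers proof of Borel's theorem in \cite[p.110]{c} "can be used directly," and your write-up (i.i.d.\ digit array, splitting block indices into residue classes modulo $m$ to restore independence, SLLN for each class, countable intersection over $(m,D)$, and reduction to fractional parts) is the standard instantiation of that claim. No discrepancy with the paper's approach.
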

Motivated by this result and Theorem \ref{TT1}, we propose the
following generalization of Borel conjecture.
 \begin{conjecture}[Generalized Borel
Conjecture] $\newline$
 Let
$\alpha_{1},\alpha_{2},\cdots,\alpha_{n}$ be $n$ algebraic
numbers, such that $1,\alpha_{1},\alpha_{2},\cdots,\alpha_{n}$ are
linearly independent over $\mathbb{Q}$. Then
$(\alpha_{1},\alpha_{2},\cdots,\alpha_{n})$ is normal to base $b$.
\end{conjecture}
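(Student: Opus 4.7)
The plan is to attempt the Generalized Borel Conjecture by progressively strengthening the combinatorial restrictions that Theorem \ref{TT1} already imposes on the joint $b$-ary expansion of $(\alpha_1, \ldots, \alpha_n)$. I would organize the attack as a ladder of three increasingly strong statements: (a) every $n\times m$ digit matrix $D$ occurs at least once as some $D_i$; (b) every such $D$ occurs with positive lower density; (c) every such $D$ occurs with the exact frequency $b^{-mn}$. Theorem \ref{TT1} already rules out certain ``linear coincidences'' in the joint expansion, and each rung of the ladder would be attacked by converting an asymptotic deficit in the occurrence count of some $D$ into a prefix/suffix structure satisfying Condition \ref{c5}, producing a contradiction with the assumed $\mathbb{Q}$-linear independence of $1,\alpha_1,\ldots,\alpha_n$.

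For rung (a), suppose some matrix $D$ never occurs. Then for every starting index $i$, the column vector $(a_{1,i+k},\ldots,a_{n,i+k})_{k=0}^{m-1}$ avoids a fixed cell, and by a pigeonhole on the finite alphabet $\{0,\ldots,b-1\}^n$ one can extract long matching prefixes of $\alpha_1,\ldots,\alpha_n$ satisfying a nontrivial digit-wise linear congruence modulo $b^{k_i}$. Packaging this congruence with integer coefficients $a_1,\ldots,a_m$ chosen from the ``missing'' cell puts us inside Condition \ref{c5}, and Theorem \ref{TT1} forces $\mathbb{Q}$-linear dependence, a contradiction. For rung (b) the same scheme must be fed by a Szemerédi-type density input: a positive density deficit for $D$ would produce, after pigeonholing on the shift $|A^j_i|-|A^1_i|$, arbitrarily many indices $i$ where the required prefix alignment and congruence both hold with a uniform constant $L$. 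For rung (c), I would pass to the Fourier side and observe that joint normality is equivalent to the Weyl-type cancellation $\tfrac{1}{N}\sum_{i=1}^{N} e(\chi(D_i)) \to 0$ for every nontrivial additive character $\chi$ on $(\mathbb{Z}/b\mathbb{Z})^{nm}$; each such $\chi$ encodes a specific integer linear combination of digits, which is precisely the quantity controlled by the $G.C.D$ estimate of Theorem \ref{main4}.

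The main obstacle is that even the $n=1$ case is Borel's original conjecture, which remains wide open, so any honest proposal is really a reduction framework rather than a proof. Concretely, the subspace theorem (Theorem \ref{main8}) underlying Theorem \ref{main4} is a qualitative statement -- solutions lie in finitely many proper subspaces, with no effective bound on the exceptional indices -- whereas normality is a quantitative density statement. Closing this gap appears to require either a quantitative version of the subspace theorem with polynomial savings in the height, so that the $G.C.D$ bound in Theorem \ref{main4} can be iterated $N$ times without losing the density, or an entirely new Diophantine input. I would therefore view my proposal as identifying, step by step, which effective strengthening of Theorem \ref{main4} is needed to reach each rung of the ladder; rung (a) alone would already be a substantial and, I believe, accessible theorem, while rungs (b) and (c) appear to lie beyond current technology.
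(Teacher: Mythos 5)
The statement you are addressing is presented in the paper as a \emph{conjecture} (the Generalized Borel Conjecture); the paper offers no proof of it, only motivation from Theorem \ref{TT1} together with the measure-theoretic fact that almost all $n$-tuples are normal. So there is no proof in the paper to compare against, and your own text concedes that even the case $n=1$ is Borel's original conjecture, which remains open. What you have written is a programme rather than a proof, and even as a programme its key reduction does not go through.

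Concretely, the first rung of your ladder --- the one you call accessible --- already contains a gap I do not see how to close. Suppose an $n\times m$ digit matrix $D$ never occurs. You claim that a pigeonhole on the alphabet $\{0,\ldots,b-1\}^{n}$ extracts data satisfying Condition \ref{c5}. But Condition \ref{c5} requires, for infinitely many $i$, blocks $B^{1}_i,\ldots,B^{m}_i$ of a common length $k_i\to\infty$, each terminating a prefix $A^{j}_iB^{j}_i$ of $\textbf{a}^{j}$ with $|A^{j}_i|\leq L|B^{j}_i|$, and satisfying the \emph{exact} congruence $\sum_{j}a_j\overline{B^{j}_i}\equiv 0\ (mod\ b^{k_i})$ with coefficients $a_1,\ldots,a_m$ fixed independently of $i$. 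That congruence is a rigid arithmetic relation determining all $k_i$ low-order digits of an integer linear combination of the blocks; the mere avoidance of one pattern among the $b^{nm}$ possible $n\times m$ windows yields repeated columns by pigeonhole but no such congruence. Already for $n=1$ your rung (a) asserts $p(\omega,m)=b^{m}$ for every $m$, which is strictly stronger than the $p(\omega,m)/m\to\infty$ that the full strength of Theorem \ref{main1} delivers, so a reduction of rung (a) to Condition \ref{c5} (or Condition \ref{ccc}) cannot work in the generality you claim. The Fourier step in rung (c) has the same defect in sharper form: Theorem \ref{main4} bounds the greatest common divisor of a single integer with $b^{\lfloor D\rfloor}$ via the qualitative subspace theorem, which controls neither the number nor the distribution of exceptional indices; it provides no cancellation in Weyl sums over $N$ shifts, and no quantitative version with the required uniformity is currently available. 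Your closing diagnosis of this obstacle is accurate, but it confirms that none of the three rungs is established here.
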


It is easy to check that the above conjecture is invalid if
$1,\alpha_{1},\alpha_{2},\cdots,\alpha_{n}$ are linearly dependent
over $\mathbb{Q}$.
 \item
[(ii)]The proof of Theorem \ref{main4} and Remark \ref{remariks}
implies the following theorem:
 \begin{theorem} \label{main11}
Let $k$ be a positive integer, let $\epsilon$ be a positive
number, and let
$$f(x)=a_kx^k+\cdots+a_1x+a_0$$ be a polynomial with real algebraic coefficients, where $a_k$ is irrational. Then
 we have $$G.C.D([f(b^n)],b^n)<b^{\epsilon n},$$ when
$n\in \mathbb{N}$ is sufficiently large.
\end{theorem}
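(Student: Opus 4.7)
The plan is to recognize $f(b^n)$ as an instance of the sum $f(D)+P(D)$ appearing in Theorem \ref{main4}, and then invoke the refinement noted in Remark \ref{remariks}, which requires irrationality only of the coefficient attached to the largest exponent. Since the leading coefficient $a_k$ is the irrational one by hypothesis, this is exactly the right tool.

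Let $J=\{j\in\{1,\dots,k\}:a_j\neq 0\}$, and label its elements $k=j_1>j_2>\cdots>j_m$. For each positive integer $n$, put $\alpha_i:=a_{j_i}$ and $d_{i,1}:=j_i n$, forming the $(\underbrace{1,\dots,1}_m)$-array $D_n:=\{d_{i,1}\}$, and take $P(D_n):=a_0$, a bounded constant. Then
$$[f(b^n)]=\Bigl[\sum_{i=1}^{m}\alpha_i b^{d_{i,1}}+a_0\Bigr]=[f(D_n)+P(D_n)].$$
The array $D_n$ is $k$-admissible, since $\max_i d_{i,1}/\min_i d_{i,1}=k/j_m\leq k$; its entries are strictly decreasing, so Remark \ref{remariks} applies; and both $\lfloor D_n\rfloor=j_m n$ and $\langle\overline{D_n}\rangle\geq n$ tend to infinity with $n$. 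Among the algebraic coefficients $\alpha_1,\dots,\alpha_m$, only $\alpha_1=a_k$ is required to be irrational, as guaranteed by hypothesis.

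Given $\epsilon>0$, apply the refinement with parameter $\epsilon':=\epsilon/k$ to the sequence $\{D_n\}_{n\geq 1}$; for all sufficiently large $n$ one obtains
$$G.C.D\bigl([f(b^n)],\,b^{\lfloor D_n\rfloor}\bigr)<b^{\epsilon'\lfloor D_n\rfloor}=b^{\epsilon' j_m n}\leq b^{\epsilon n}.$$
Since $\lfloor D_n\rfloor=j_m n\geq n$, $b^n$ divides $b^{\lfloor D_n\rfloor}$, so $G.C.D([f(b^n)],b^n)$ divides $G.C.D([f(b^n)],b^{\lfloor D_n\rfloor})$, whence the target inequality. The degenerate case $m=1$ (only the leading term contributes non-trivially beside the constant) makes the condition on $\langle\overline{D_n}\rangle$ vacuous, consistent with the setup of Theorem \ref{main4}; the case $m=0$ cannot arise because $a_k\neq 0$. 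There is no new obstacle here: the Diophantine content has already been isolated in the proof of Theorem \ref{main4} via the Schmidt subspace theorem, and the present statement is a convenient repackaging in terms of a polynomial evaluated at $b^n$.
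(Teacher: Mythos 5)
Your proposal is correct and follows exactly the route the paper intends: the paper only remarks that the theorem "follows from the proof of Theorem \ref{main4} and Remark \ref{remariks}," and your argument supplies precisely those details — packaging the nonzero non-constant terms as a $k$-admissible $(1,\dots,1)$-array with strictly decreasing exponents so that the Remark's weakened irrationality hypothesis (only the coefficient of the top exponent, here $a_k$) suffices, absorbing $a_0$ into the bounded perturbation $P$, and rescaling $\epsilon$ to pass from $b^{\lfloor D_n\rfloor}$ to $b^n$. Nothing further is needed.
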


It is reasonable to expect that the following more general result
also holds:

 \begin{conjecture}
Let $\epsilon$ be a positive number, let $k$, $l$ be two positive
integers,and let
$$f(x)=a_kx^k+\cdots+a_1x+a_0,$$
and
$$g(x)=c_lx^l+\cdots+c_1x+c_0,$$ be polynomials with real algebraic coefficients, where $a_k$
and $c_l$ are linearly independent over $\mathbb{Q}$. Then
 we have $$G.C.D([f(b^n)],[g(b^n)])<b^{\epsilon n},$$ when
$n\in \mathbb{N}$ is sufficiently large.
\end{conjecture}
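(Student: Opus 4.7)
The plan is to attempt the generalization by the subspace-theorem machinery used to prove Theorem \ref{main4}. Suppose for contradiction that there exist $\epsilon>0$ and an infinite set $\mathcal{N}\subset\mathbb{N}$ such that $R_n:=G.C.D([f(b^n)],[g(b^n)])\geq b^{\epsilon n}$ for every $n\in\mathcal{N}$. Set $M=\max(k,l)$ and consider the integer vector
$$X_n=\bigl([f(b^n)],\,[g(b^n)],\,b^n,\,b^{2n},\ldots,b^{Mn}\bigr)\in\mathbb{Z}^{M+2},$$
with place set $S=\{\infty\}\cup\{p\text{ prime}:p\mid b\}$.

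At the infinite place I would use the two ``almost-vanishing'' linear forms
$$L_{0,\infty}=-x_0+\sum_{i=1}^{k}a_{i}y_{i},\qquad L_{1,\infty}=-x_1+\sum_{j=1}^{l}c_{j}y_{j},$$
where $x_0,x_1$ are the coordinates of $[f(b^n)],[g(b^n)]$ and $y_i$ is the coordinate of $b^{ni}$, together with the coordinate forms $y_1,\ldots,y_M$; at each $p\mid b$, use the $M+2$ coordinate forms. Linear independence at each place is immediate. We have $|L_{0,\infty}(X_n)|=|\{f(b^n)\}-a_0|=O(1)$ and $|L_{1,\infty}(X_n)|=|\{g(b^n)\}-c_0|=O(1)$, while for each $i=1,\ldots,M$ the archimedean factor $b^{ni}$ cancels the $p\mid b$ contribution $\prod_{p\mid b}|b^{ni}|_p=b^{-ni}$ by the product formula. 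The entire double product therefore collapses to
$$\prod_{v\in S}\prod_{i}|L_{i,v}(X_n)|_v\;\leq\;\frac{C}{A_n\,A'_n},$$
where $A_n,A'_n$ denote the $b$-parts of $[f(b^n)],[g(b^n)]$.

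Since $\|X_n\|\leq C' b^{Mn}$, Theorem \ref{main8} applies as soon as $A_n A'_n\geq b^{\delta n}$ for some fixed $\delta>0$, and then furnishes a nontrivial integer relation
$$e_0[f(b^n)]+e_1[g(b^n)]+\sum_{j=1}^{M}e'_j b^{nj}=0$$
along an infinite subsequence. Dividing successively by $b^{Mn},b^{(M-1)n},\ldots$ and using the $\mathbb{Q}$-linear independence of $1,a_k,c_l$ (together with the fact that this hypothesis forces $a_k,c_l$ to be irrational) forces all coefficients to vanish, which is the desired contradiction. This argument settles the subcase in which the $b$-part $R_n^{(b)}$ of $R_n$ satisfies $R_n^{(b)}\geq b^{\epsilon n/4}$, since $R_n^{(b)}$ divides both $A_n$ and $A'_n$, whence $A_n A'_n\geq (R_n^{(b)})^2$.

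The hard part will be the complementary subcase in which $R_n^{(b)}$ is small but the non-$b$-part $R_n/R_n^{(b)}$ still exceeds $b^{\epsilon n/2}$. The primes dividing this non-$b$-part can be arbitrarily large and vary with $n$, so they cannot be packed into any fixed finite set $S$ required by the subspace theorem, and the bound $1/(A_n A'_n)$ becomes too weak. Two possible strategies are: (i) appeal to Corvaja--Zannier-type refinements of the subspace theorem developed for bounding greatest common divisors of power-sum or polynomial values, where auxiliary variables are used to capture valuations at varying primes; or (ii) perform a ``degree reduction'' by replacing $(f,g)$ with the lower-degree pair arising from $c_l f(x)-a_k x^{k-l}g(x)$, which $R_n$ divides up to a bounded integer-part error, and iterate until the residual is $O(1)$, forcing $R_n=O(1)$. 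Controlling the accumulated integer-part errors in the iterative approach and making the non-$S$ valuations tractable in the Corvaja--Zannier approach is where the essential difficulty lies, and presumably why the statement is left as a conjecture.
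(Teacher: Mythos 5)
There is nothing in the paper to compare your proposal against: the statement you are proving is explicitly labelled a \emph{conjecture} and the paper offers no proof of it. Your write-up is honest about this --- it resolves only the subcase in which the part of $R_n=G.C.D([f(b^n)],[g(b^n)])$ supported on primes dividing $b$ is at least $b^{\epsilon n/4}$, and openly concedes that the complementary case is unresolved. That concession is exactly where the real content of the conjecture lies. Unlike Theorems \ref{main4}--\ref{main7}, where the GCD is taken against a power of $b$ and so is automatically supported on the fixed finite set of primes dividing $b$, here $R_n$ can be divisible by large primes varying with $n$; the $p$-adic subspace theorem (Theorem \ref{main8}) requires a fixed finite place set $S$, so your product $\prod_{v\in S}\prod_i|L_{i,v}(X_n)|_v$ only sees the $b$-parts $A_n,A'_n$ and gives no leverage when those are small. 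Your proposed escapes (Corvaja--Zannier GCD bounds, or iterated degree reduction via $c_lf(x)-a_kx^{k-l}g(x)$) are reasonable directions but are not carried out, so the proposal is not a proof.

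One further caution about the subcase you do treat: the hypothesis of the conjecture is only that $a_k$ and $c_l$ are linearly independent over $\mathbb{Q}$ as a pair, which does \emph{not} imply that $1,a_k,c_l$ are linearly independent, nor that both are irrational (take $a_k=1$, $c_l=\sqrt{2}$). Your final step, ``using the $\mathbb{Q}$-linear independence of $1,a_k,c_l$,'' therefore invokes a strictly stronger hypothesis than the one stated; to kill all coefficients $e_0,e_1,e'_j$ in the relation furnished by the subspace theorem you would need a separate argument (or a case analysis) when, say, $a_k$ is rational. So even the partial result as written has a gap relative to the statement being claimed.
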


 \item
[(iii)] All the congruences in Conditions \ref{c3}, \ref{c5} and
\ref{c9} are linear. We ask that wether similar results held for
some nonlinear congruences. According to Theorems
\ref{main4}-\ref{main7}, this amounts to similar estimation of the
upper bound of greatest common divisor of more general power sums.
It is too difficult to give a general formulation of such
question. Following are three simplest illustrations.

 \begin{conjecture}\label{main15}
Let $\alpha$, $\beta$ be two irrational algebraic numbers, and let
$\epsilon$ be a positive number. For nonnegative integers $k$ and
$m$, set
$$f(k,m)=[\alpha b^{k+m}][\beta b^m]+1.$$
 Then
 we have $$G.C.D(f(k,m),b^m)<b^{\epsilon m},$$ when
$m$ is sufficiently large.
\end{conjecture}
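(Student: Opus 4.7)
The natural strategy is to follow the proof of Theorem \ref{main4}. Suppose for contradiction that there is an infinite sequence of pairs $(k_i, m_i)$ with
$$R_i := G.C.D(f(k_i, m_i), b^{m_i}) \geq b^{\epsilon m_i},$$
set $u_i = [\alpha b^{k_i+m_i}]$, $v_i = [\beta b^{m_i}]$, $Q_i = u_i v_i + 1$, and consider the integer vector
$$\mathbf{X}_i = \tfrac{1}{R_i}\bigl(Q_i,\; b^{k_i+2m_i},\; b^{k_i+m_i},\; b^{m_i}\bigr) \in \mathbb{Z}^4,$$
whose entries are all integers because $R_i \mid b^{m_i}$ and $R_i \mid Q_i$. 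At the archimedean place I would use the linearly independent forms $L_{0,\infty}(\mathbf{x}) = x_0 - \alpha\beta x_1$ and $L_{i,\infty}(\mathbf{x}) = x_i$ for $i = 1,2,3$, reflecting the leading-order approximation $Q_i \approx \alpha\beta b^{k_i+2m_i}$; at each prime $p \mid b$ I would use the coordinate forms $L_{i,p}(\mathbf{x}) = x_i$. The three $S$-unit coordinates $b^{k_i+2m_i}/R_i$, $b^{k_i+m_i}/R_i$, $b^{m_i}/R_i$ each contribute factor $1$ to the subspace-theorem product by the product formula, so the total product is controlled by the archimedean gain from $L_{0,\infty}$.

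A direct expansion
$$Q_i - \alpha\beta b^{k_i+2m_i} = -\alpha b^{k_i+m_i}\{\beta b^{m_i}\} - \beta b^{m_i}\{\alpha b^{k_i+m_i}\} + \{\alpha b^{k_i+m_i}\}\{\beta b^{m_i}\} + 1$$
yields $|L_{0,\infty}(\mathbf{X}_i)| \leq C\, b^{k_i+m_i}/R_i$. If this were bounded by $\|\mathbf{X}_i\|^{-\delta}$ for some fixed $\delta > 0$, Theorem \ref{main8} would force an infinite subsequence of the $\mathbf{X}_i$ into a common proper linear subspace. Dividing the resulting relation by the dominant coordinate $b^{k_i+2m_i}/R_i$ and letting the gaps $k_i+2m_i - k_i - m_i$ and $k_i+m_i - m_i$ tend to infinity along a further subsequence would then, exactly as in the proof of Theorem \ref{main4}, kill three of the four coefficients by the irrationality of $\alpha$, $\beta$, and $\alpha\beta$, so that the surviving coefficient multiplies the nonzero $Q_i$---a contradiction.

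The main obstacle, and the reason this is stated only as a conjecture, is that $u_i v_i$ is a \emph{product} rather than a linear combination of the $S$-units $b^{k_i+m_i}$ and $b^{m_i}$, so the approximation $Q_i \approx \alpha\beta b^{k_i+2m_i}$ carries an intrinsic error of order $b^{k_i+m_i}$, not $O(1)$. The total product in the subspace theorem is then of order $b^{k_i+m_i}/R_i$ against $\|\mathbf{X}_i\|$ of order $b^{k_i+2m_i}/R_i$, so Theorem \ref{main8} applies only when $R_i$ is essentially as large as $b^{k_i+m_i}$---far beyond the hypothesis $R_i \geq b^{\epsilon m_i}$ whenever $k_i$ grows with $m_i$. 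The obstructing cross-terms $\alpha b^{k_i+m_i}\{\beta b^{m_i}\}$ and $\beta b^{m_i}\{\alpha b^{k_i+m_i}\}$ involve the nonalgebraic fractional parts $\{\alpha b^{k_i+m_i}\}$ and $\{\beta b^{m_i}\}$, and so cannot be absorbed by adjoining further linear forms with algebraic coefficients. Closing the gap seems to require genuinely new input, for example a Corvaja--Zannier type $G.C.D$ estimate for products of $S$-units with $\pm 1$ perturbations, which lies outside the strictly linear machinery of the present paper.
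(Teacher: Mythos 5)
This statement is not a theorem of the paper at all: it is Conjecture \ref{main15}, posed in the final section as an open question about nonlinear congruences, and the paper offers no proof of it. Your proposal, to its credit, does not actually claim to prove it either --- it is an honest autopsy of why the method of Theorem \ref{main4} fails here, and that diagnosis is correct. The expansion
$$Q_i - \alpha\beta b^{k_i+2m_i} = -\alpha b^{k_i+m_i}\{\beta b^{m_i}\} - \beta b^{m_i}\{\alpha b^{k_i+m_i}\} + \{\alpha b^{k_i+m_i}\}\{\beta b^{m_i}\} + 1$$
is right, and the resulting archimedean term of size roughly $b^{k_i+m_i}/R_i$ is indeed far too large for Theorem \ref{main8} to bite under the weak hypothesis $R_i \geq b^{\epsilon m_i}$: the whole engine of the paper depends on $f(D)$ being a $\mathbb{Z}$-linear combination of terms $\alpha_i b^{d}$, so that the archimedean form $L_{0,\infty}$ has error $O(1)$ rather than error comparable to a power of $b$. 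The fractional parts $\{\alpha b^{k_i+m_i}\}$ and $\{\beta b^{m_i}\}$ are not values of fixed algebraic linear forms in the $S$-unit coordinates, so they cannot be absorbed by enlarging the system of forms, exactly as you say.

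So there is no gap to report in the sense of a step you got wrong; rather, you should be aware that you have correctly reproduced the reason the author leaves this as a conjecture (the author explicitly remarks that the nonlinear cases ``amount to similar estimation of the upper bound of greatest common divisor of more general power sums'' and are ``too difficult'' to handle with the present machinery). Your pointer toward Corvaja--Zannier type greatest-common-divisor estimates for products of $S$-units is a reasonable suggestion for where genuinely new input might come from, but as written your text establishes nothing beyond the failure of the linear approach, and it should not be presented as a proof of the conjecture.
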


 \begin{conjecture}\label{main16}
Let  $\alpha$, $\beta$ and $\epsilon$ be as before. For two
nonnegative integers $k$ and $m$, set
$$f(k,m)=[\alpha b^{k+m}]^2+[\beta b^m].$$
 Then
 we have $$G.C.D(f(k,m),b^m)<b^{\epsilon m},$$ when
$m$ is sufficiently large.
\end{conjecture}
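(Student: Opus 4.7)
My plan is to adapt the subspace-theorem template used in the proof of Theorem \ref{main4}. Suppose for contradiction that the conclusion fails; then there is an infinite sequence of pairs $(k_n,m_n)$ with $m_n\to+\infty$ and
$$R_n:=G.C.D(f(k_n,m_n),b^{m_n})\geq b^{\epsilon m_n}.$$
Write $Q_n=[\alpha b^{k_n+m_n}]$ and $S_n=[\beta b^{m_n}]$, so that $f(k_n,m_n)=Q_n^{2}+S_n$. The key new feature, compared with Theorem \ref{main4}, is that $f$ is quadratic in the auxiliary quantity $Q_n$, so the Schmidt vector has to live in a higher-dimensional ambient space that records the square.

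The natural guess is the $4$-vector
$$\mathbf{X}_n=\frac{1}{R_n}\bigl(f(k_n,m_n),\ Q_n\,b^{k_n+m_n},\ b^{2(k_n+m_n)},\ b^{m_n}\bigr)\in\mathbb{Z}^{4},$$
whose entries are integers because $R_n\mid b^{m_n}\mid b^{k_n+m_n}\mid b^{2(k_n+m_n)}$ and $R_n\mid f(k_n,m_n)$. The linearization identity
$$-Q_n^{2}+2\alpha Q_n b^{k_n+m_n}-\alpha^{2}b^{2(k_n+m_n)}=-(\alpha b^{k_n+m_n}-Q_n)^{2}=-\{\alpha b^{k_n+m_n}\}^{2}$$
shows that the archimedean form $L_{0,\infty}(\mathbf{x})=-x_0+2\alpha x_1-\alpha^{2}x_2+\beta x_3$ evaluates on $\mathbf{X}_n$ to
$$\frac{-\{\alpha b^{k_n+m_n}\}^{2}+\{\beta b^{m_n}\}}{R_n},$$
which is $O(1/R_n)=O(b^{-\epsilon m_n})$, exactly as one wants. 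One would complete the system by taking $L_{i,\infty}(\mathbf{x})=x_i$ for $i=1,2,3$ and $L_{i,p}(\mathbf{x})=x_i$ for each $p\mid b$.

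The main obstacle is the product-formula step. In the proof of Theorem \ref{main4}, every nontrivial coordinate of $\mathbf{X}_k$ is a ratio of $b$-smooth integers, so its archimedean absolute value is cancelled exactly by its $p$-adic absolute values at primes dividing $b$, and the only surviving contribution to $\prod_{p\in S}\prod_i|L_{i,p}(\mathbf{X}_k)|_p$ comes from $L_{0,\infty}$. Here, by contrast, the second coordinate $Q_n b^{k_n+m_n}/R_n$ contains the factor $Q_n=[\alpha b^{k_n+m_n}]$, whose prime factorization is entirely uncontrolled; a direct calculation shows that for any reshuffling of $L_{1,p}$ and $L_{2,p}$ among $\{x_1,x_2\}$ at places $p\mid b$, the invariant $|X_1X_2|_\infty\prod_{p\mid b}|X_1X_2|_p$ equals $Q_n/g_n$, the $b$-coprime part of $Q_n$, which can be of order $b^{k_n+m_n}$. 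This swamps the $b^{-\epsilon m_n}$ gain from $L_{0,\infty}$, so the subspace theorem does not immediately produce a nontrivial relation.

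Closing this gap seems to require a genuinely new input. Enlarging $S$ to absorb the primes of $Q_n$ is not allowed, since $S$ must be finite and those primes vary with $n$. One might instead view the data $(Q_n/b^{k_n+m_n},\,f(k_n,m_n)/b^{2(k_n+m_n)})$ as a simultaneous rational approximation to the pair $(\alpha,\alpha^{2})$ and invoke Schmidt's theorem in its simultaneous form, but this requires $1,\alpha,\alpha^{2}$ to be $\mathbb{Q}$-linearly independent and therefore leaves the case $\deg\alpha=2$ untouched; moreover, the regime where $k_n$ is much larger than $m_n$ further inflates $\|\mathbf{X}_n\|$ and tightens the inequality we need. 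I expect the genuine difficulty to be exactly this interaction between the intrinsically quadratic dependence on $Q_n$ and the rigidity of the product formula, and to be precisely why the statement appears here as a conjecture rather than a theorem.
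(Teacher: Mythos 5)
This statement is presented in the paper as a \emph{conjecture}, not a theorem: the author explicitly lists it among open problems in Section 4 as an illustration of the nonlinear congruences that the methods of Theorems \ref{main4}--\ref{main7} do not reach, and offers no proof. So there is no proof in the paper to compare against, and your proposal, which ends by concluding that the statement remains open, is in agreement with the paper rather than in conflict with it.

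Your technical analysis of the obstruction is accurate and worth recording. The linearization identity $-Q_n^2+2\alpha Q_n b^{k_n+m_n}-\alpha^2 b^{2(k_n+m_n)}=-\{\alpha b^{k_n+m_n}\}^2$ is correct, and the archimedean form $L_{0,\infty}$ does evaluate to $(-\{\alpha b^{k_n+m_n}\}^2+\{\beta b^{m_n}\})/R_n=O(b^{-\epsilon m_n})$ as you claim. Your diagnosis of where the template breaks is also correct: in the proof of Theorem \ref{main4} every coordinate of $\mathbf{X}_k$ other than the zeroth is $b^{d_{i,1,k}}/R_k$, a ratio of $b$-smooth integers, so the product formula kills its contribution exactly; here the coordinate $Q_nb^{k_n+m_n}/R_n$ carries the $b$-coprime part $q_n$ of $Q_n=[\alpha b^{k_n+m_n}]$, and the invariant $|X_1|_\infty\prod_{p\mid b}|X_1|_p=q_n$ can be of order $b^{k_n+m_n}$, which overwhelms the $b^{-\epsilon m_n}$ saving from $L_{0,\infty}$. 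This is precisely the ``more general power sums'' difficulty the author alludes to when motivating the conjectures in Section 4(iii). The one thing to flag is presentational: since the target is labelled a conjecture, you should not frame your text as a ``proof attempt'' at all, but as an explanation of why the statement lies outside the scope of the paper's technique --- which is what it in fact is.
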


 \begin{conjecture}
Let  $\alpha$, $\beta$ and $\epsilon$ be as before. For positive
integer $m$, set
$$f(m)=[\beta[\alpha b^{m}]^2].$$
 Then
 we have $$G.C.D(f(m),b^m)<b^{\epsilon m},$$ when
$m$ is sufficiently large.
\end{conjecture}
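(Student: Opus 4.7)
The plan is to mimic the proof of Theorem~\ref{main4}: assume for contradiction that there is an infinite subset $\mathbb{N}'\subset\mathbb{N}$ along which $R_m:=G.C.D(f(m),b^{m})\geq b^{\epsilon m}$, construct a suitable integer vector $\mathbf{X}_m$, and invoke the $p$-adic Schmidt subspace theorem (Theorem~\ref{main8}) to reach a contradiction with the irrationality of $\alpha$. The first step is to linearize $f(m)$. Writing $A_m:=[\alpha b^m]$ and $\delta_m:=\{\alpha b^m\}\in[0,1)$, so that $\alpha b^m=A_m+\delta_m$, a direct expansion together with the identity $b^m\delta_m=\alpha b^{2m}-b^m A_m$ gives
$$\beta A_m^2=-\alpha^2\beta\, b^{2m}+2\alpha\beta\, b^m A_m+\beta\delta_m^2,$$
so that $f(m)=-\alpha^2\beta\, b^{2m}+2\alpha\beta\, b^m A_m+O(1)$. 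This is an exact linear relation, modulo bounded error, among the three integers $f(m)$, $b^{2m}$ and $b^m A_m$---precisely the kind of relation the subspace theorem is designed to exploit.

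With this in hand, set
$$\mathbf{X}_m=\tfrac{1}{R_m}\bigl(f(m),\,b^{2m},\,b^{m}A_{m}\bigr)\in\mathbb{Z}^3,$$
and, following the template of the proof of Theorem~\ref{main4}, take at the infinite place $L_{0,\infty}=x_0+\alpha^2\beta x_1-2\alpha\beta x_2$, $L_{1,\infty}=x_1$, $L_{2,\infty}=x_2$, and at each $p\mid b$ the coordinate forms $L_{i,p}=x_i$. The linearization yields $|L_{0,\infty}(\mathbf{X}_m)|=O(1/R_m)$; the product formula disposes of the pair attached to $x_1$ since $b^{2m}/R_m$ is an $S$-unit for $S=\{\infty\}\cup\{p:p\mid b\}$. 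The pair attached to $x_2=(b^m/R_m)\cdot A_m$ is the delicate one: the $S$-unit factor $b^m/R_m$ cancels via the product formula, but from $A_m$ only the $b$-smooth part cancels, leaving a residual factor equal to the $b$-coprime part $A_m^{*}$ of $A_m$. A short computation then yields
$$\prod_{p\in S}\prod_{i=0}^{2}|L_{i,p}(\mathbf{X}_m)|_p\ \ll\ \frac{A_m^{*}}{R_m}.$$
Were this bounded by $\|\mathbf{X}_m\|^{-\epsilon'}$ for some $\epsilon'>0$, Theorem~\ref{main8} would supply a nontrivial linear relation over $\mathbb{Q}$ between $f(m)$, $b^{2m}$ and $b^m A_m$ along a subsequence; separating dominant powers of $b$ as in the proof of Theorem~\ref{main4} would then contradict the irrationality of $\alpha$ and of $\alpha^2\beta$ and finish the argument.

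The hard part, however, is exactly this quantitative step. Since $\|\mathbf{X}_m\|\asymp b^{2m}/R_m$ and $A_m^{*}$ has no better general upper bound than $A_m\leq\alpha b^m$ (Theorem~\ref{main6} bounds only $G.C.D(A_m,b^m)$, a strictly weaker quantity than the $b$-smooth part of $A_m$), the estimate above is of order $b^{(1-\epsilon)m}$, whereas the subspace theorem demands it be at most $\|\mathbf{X}_m\|^{-\epsilon'}\leq b^{-\epsilon' m}$; since $1-\epsilon>0>-\epsilon'$, the hypothesis of Theorem~\ref{main8} cannot be verified as it stands. Closing this gap seems to require a genuinely new input---for instance, dynamically enlarging $S$ by a bounded number of the largest prime divisors of $A_m^{*}$ for each $m$, together with the corresponding coordinate forms, in the spirit of the Corvaja--Zannier estimates on $\gcd$'s of $S$-unit combinations, or a quadratic refinement of Theorem~\ref{main4} itself. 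This is where I expect the real work to lie, and it is the reason the statement is offered here only as a conjecture.
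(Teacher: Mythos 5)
The statement you were given is one of the paper's open conjectures: the paper supplies no proof of it, so there is no argument of the author's to compare yours against. Your write-up is honest about this, and your diagnosis of why the method of Theorem~\ref{main4} does not extend is correct. The linearization is right: from $\alpha b^m=A_m+\delta_m$ and $b^m\delta_m=\alpha b^{2m}-b^mA_m$ one indeed gets $\beta A_m^2=-\alpha^2\beta\,b^{2m}+2\alpha\beta\,b^mA_m+\beta\delta_m^2$, so $f(m)$ differs from a fixed algebraic linear combination of $b^{2m}$ and $b^mA_m$ by a bounded amount. And you have put your finger on precisely the obstruction: in the vector $\mathbf{X}_m=\tfrac{1}{R_m}(f(m),b^{2m},b^mA_m)$ the third coordinate is not an $S$-unit times a bounded quantity for $S=\{\infty\}\cup\{p:p\mid b\}$, so the product formula cancels only the $b$-smooth part of $A_m$ and the $b$-coprime part $A_m^{*}$ survives in $\prod_{p\in S}\prod_i|L_{i,p}(\mathbf{X}_m)|_p$. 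Generically $A_m^{*}\asymp A_m\asymp b^m$, so the product is of order $b^{(1-\epsilon)m}$, which is useless for Theorem~\ref{main8}; and Theorem~\ref{main6} bounds only $G.C.D(A_m,b^{\lfloor D\rfloor})$, not the full $b$-smooth part of $A_m$, so it cannot rescue the estimate. This is exactly the kind of ``nonlinear congruence'' difficulty the author flags in Section 4 when introducing these conjectures.

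Your suggested escape routes are also the natural ones. Enlarging $S$ adaptively to absorb the large prime factors of $A_m^{*}$ runs into the usual problem that the subspace theorem requires $S$ fixed in advance, which is why the Corvaja--Zannier circle of ideas (bounding $\gcd(u-1,v-1)$ for $S$-units $u,v$) is the right reference point: their arguments handle precisely the situation where a product of two ``independent'' quantities must be shown to have small common divisor with a power, and the present conjecture is morally a statement of that type with $[\alpha b^{k+m}]$ and $[\beta b^m]$ playing the roles of near-$S$-units. In short: your proposal is not a proof and correctly does not claim to be; it is a sound analysis of why the statement remains open, consistent with the paper's own presentation of it as a conjecture.
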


Conjectures \ref{main15} and \ref{main16}, if valid, would provide
new evidences for the Generalized Borel Conjecture.
\end{enumerate}

\end{document}